\lstdefinelanguage{Magma}
{
keywords={for,end,if,then,else,elif,while,function,return,cat,&,and,or },
morekeywords={Seqset,Setseq,Polytope,AutomorphismGroup,RowSequence,IdentifyGroup,
	      Subgroups,PermutationMatrix,Generators,MatrixGroup,Transpose},
sensitive=false,
morecomment=[l]{//},
morecomment=[s]{/*}{*/},
morestring=[b]",
}
\newtheorem{thm}{Theorem}
\newtheorem*{thm*}{Theorem}
\newtheorem{lem}[thm]{Lemma}
\newtheorem{prop}[thm]{Proposition}
\newtheorem{coroll}[thm]{Corollary}
\theoremstyle{definition}
\newtheorem{defin}{Definition}
\newtheorem{question}{Question}
\theoremstyle{remark}
\newtheorem{remark}[thm]{Remark}
\newcommand{\M}{\mathcal{M}}
\newcommand{\C}{\mathbb{C}}
\newcommand{\N}{\mathbb{N}}
\newcommand{\Z}{\mathbb{Z}}
\newcommand{\HH}{H}
\newcommand{\E}{{\mathcal E}}
\newcommand{\EE}{{\mathfrak E}}
\newcommand{\I}{{\mathcal I}}
\newcommand{\J}{{\mathcal J}}
\newcommand{\OO}{{\mathcal O}}
\newcommand{\ux}{\underline{x}}
\DeclareMathOperator{\PP}{\mathbb{P}}
\newcommand{\cL}{\mathcal{L}}
\newcommand{\cI}{\mathcal{I}}
\DeclareMathOperator{\id}{Id}
\DeclareMathOperator{\h}{h}
\DeclareMathOperator{\Tors}{Tors}
\DeclareMathOperator{\Aut}{Aut}
\DeclareMathOperator{\Pic}{Pic}
\DeclareMathOperator{\PGL}{PGL}
\DeclareMathOperator{\Fix}{Fix}
\newcommand{\ti}[1]{\tilde{#1}}
\newcommand{\spa}[1]{\left < #1 \right >}
\title[A twisted bicanonical system with base points]{A twisted bicanonical system with base points}
\thanks{The first author was partially supported by CIRM "Centro Internazionale per la Ricerca Matematica" (Fondazione Bruno Kessler). The second author was partially supported by the projects PRIN2010-2011 ''Geometria delle variet\`a algebriche" and Futuro in Ricerca 2012 "Moduli Spaces and Applications". Both authors are members of GNSAGA-INdAM. The second author is indebted with Jorge Neves for some inspiring discussions we had during the preparation of \cite{BFNP13}, that originated some of the results in section \ref{twistedbicanonical}. Both authors thank the anonymous referee for correcting a mistake in Theorem \ref{thm: 3}.
}
\title{A twisted bicanonical system with base points}
\author{Filippo F. Favale}
\address[Filippo F. Favale]{Department of Mathematics, University of Trento, via Sommarive 14,
I-38123 Trento, Italy}
\email{filippo.favale@unitn.it}
\author{Roberto Pignatelli}
\address[Roberto Pignatelli]{Department of Mathematics, University of Trento, via Sommarive 14,
I-38123 Trento, Italy}
\email{Roberto.Pignatelli@unitn.it}
\subjclass[2010]{14J29}
\begin{document}

\maketitle

\begin{center}
\textit{Dedicated to the memory of Alexandru Lascu}
\end{center}

\begin{abstract}
By a theorem of Reider, a twisted bicanonical system, that means a linear system of divisors numerically equivalent to a bicanonical divisor, on a minimal surface of general type, is base point free if $K^2_S \geq 5$. Twisted bicanonical systems with base points are known in literature only for $K^2=1,2$. 

\noindent We prove in this paper that all surfaces in a family of surfaces with $K^2=3$ constructed in a previous paper with G. Bini and J. Neves have a twisted bicanonical system (different from the bicanonical system) with two base points.  

\noindent We show that the map induced by the above twisted bicanonical system is birational, and describe in detail the closure of its image and its singular locus. Inspired by this description, we reduce the problem of constructing a minimal surface of general type with $K^2=3$ whose bicanonical system has base points, under some reasonable assumptions, to the problem of constructing a curve in $\mathbb{P}^3$ with certain properties.
\end{abstract}

\section{Introduction}

The study of the $n-$canonical systems $|nK|$ and of the induced $n-$canonical maps $\varphi_{|nK|}$ is one of the  main fields of the theory of surfaces of general type of the last four decades, from the celebrated results of Bombieri \cite{bombieri} (recently extended to higher dimension, see {\it e.g.} \cite{hacon}) proving, among other things, that $\varphi_{|nK|}$ is an embedding from $n\geq 5$, and has no base points for $n \geq 4$, or $n=2,3$ with some exceptions.

One of the principal field of study originated by Bombieri's paper is the analysis of the exceptions, the minimal surfaces of general type whose  $n-$canonical system has some special behaviour. At the end of the 80s Reider improved Bombieri's result.  We report here only the part of the theorem related to the base point freeness of the bicanonical system.

\begin{thm*}[\cite{reider}]\label{reider}
Let $S$ be a minimal surface of general type. If $K^2_S \geq 5$, then $|2K_S|$ has no base points.
\end{thm*}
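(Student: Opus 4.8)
The strategy is to run Reider's rank-two bundle argument to produce, at any putative base point, an effective divisor whose intersection numbers are tightly constrained, and then to kill that divisor by a parity remark special to the canonical class. Throughout put $L=K_S$. Since $S$ is a minimal surface of general type, $K_S$ is nef and big, so $L$ is nef with $L^2=K_S^2\ge 5$.

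First I would translate base point freeness into cohomology. Because $L=K_S$ is nef and big, Kawamata--Viehweg vanishing gives $H^1(S,\OO_S(K_S+L))=H^1(S,\OO_S(2K_S))=0$; writing the ideal sheaf sequence of a point $p$ and taking cohomology then shows that $p$ is a base point of $|2K_S|$ if and only if $H^1\bigl(S,\cI_p\otimes\OO_S(K_S+L)\bigr)\neq 0$. By Serre duality this non-vanishing is equivalent to $\Ext^1\bigl(\cI_p\otimes\OO_S(L),\OO_S\bigr)\neq 0$, i.e.\ to the existence of a non-split extension
\[
0\longrightarrow\OO_S\longrightarrow\E\longrightarrow\cI_p\otimes\OO_S(L)\longrightarrow 0 ,
\]
and, since $p$ being a base point of $|K_S+L|$ is precisely the Cayley--Bacharach condition for the length-one scheme $\{p\}$ with respect to this system, the Serre--Hartshorne construction lets us choose $\E$ locally free of rank two, with $c_1(\E)=L$ and $c_2(\E)=1$.

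Next, Bogomolov's inequality says a semistable rank-two bundle satisfies $c_1(\E)^2\le 4c_2(\E)=4$, which is violated by $L^2\ge 5$; hence $\E$ is Bogomolov-unstable and contains a sub-line bundle $\OO_S(A)$ with $\E/\OO_S(A)\cong\cI_W\otimes\OO_S(B)$ for a zero-dimensional $W$, where $A+B\equiv L$, $A\cdot B+\mathrm{length}(W)=1$, $(2A-L)^2>0$ and $(2A-L)\cdot H>0$ for every ample $H$. Following the composite $\OO_S(A)\to\E\to\cI_p\otimes\OO_S(L)$ and treating separately whether it vanishes or not, one extracts an \emph{effective} divisor $E$ passing through $p$; a short numerical manipulation combining $L^2\ge 5$ with the relations above forces exactly one of
\[
\text{(a)}\quad L\cdot E=0,\ \ E^2=-1,\qquad\qquad\text{(b)}\quad L\cdot E=1,\ \ E^2=0 .
\]
I expect this step --- making $\E$ locally free, deducing its instability, and cornering the destabilizing divisor into (a) or (b) --- to be the main obstacle; everything around it is formal.

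Finally I would eliminate both alternatives. For any effective divisor $E$ on the smooth surface $S$, adjunction gives $E^2+K_S\cdot E=2p_a(E)-2\in 2\Z$, so $E^2\equiv K_S\cdot E\pmod 2$. Since $L=K_S$, alternative (a) would require $-1\equiv 0\pmod 2$ and alternative (b) would require $0\equiv 1\pmod 2$; both are absurd. Therefore no point of $S$ can be a base point of $|2K_S|$, which proves the theorem.
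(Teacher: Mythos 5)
This statement is quoted from Reider's paper \cite{reider} and the present paper gives no proof of it, so the only fair comparison is with the standard argument, which is exactly what you reproduce: the Serre/Cayley--Bacharach construction of a rank-two bundle $\E$ with $c_1(\E)=K_S$, $c_2(\E)=1$, Bogomolov instability from $K_S^2\ge 5>4c_2$, the resulting dichotomy $(K_S\cdot E,E^2)\in\{(0,-1),(1,0)\}$ for an effective $E$ through the base point, and the parity contradiction $E^2+K_S\cdot E\equiv 0\pmod 2$ from adjunction. Your plan is correct; the one step you leave as a black box (cornering the destabilizing divisor into cases (a) and (b) via the Hodge index theorem) is precisely the technical core of Reider's theorem, and the concluding parity argument, which is the part specific to $L=K_S$, is complete and right.
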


This theorem has been improved by the contribution of several authors (see \cite[Theorem 3.8]{survey} and the references therein). Finally \cite{cirofab} completed the proof that $p_g(S)>0$ implies that $|2K_S|$ has no base points. So a minimal surface of general type whose bicanonical system has base points has $p_g=0$ and $K_S^2\in \{1,2,3,4\}$. 

If $K_S^2=1$, the bicanonical system is a pencil with self intersection $4$ and therefore has always base points. A bit more is true: \cite[Theorem 5.1]{pencils} proves that in this case the bicanonical system can't be a morphism. 

The only other known examples of minimal surfaces of general type whose bicanonical system has base points have $K_S^2=2$ and fundamental group either $\Z_3 \times \Z_3$ or $\Z_9$. These families have been constructed and studied in  \cite{mendes lopes pardini} (although the first family is due to Beauville and Xiao Gang, see \cite{beauville}). For all these surfaces the bicanonical system has exactly two base points. Moreover  \cite{mendes lopes pardini} describes the whole connected components of the moduli space of surfaces of general type containing these families, showing that if the fundamental group is cyclic all the surfaces in the component have bicanonical system with two base points, whereas in the other case we can deform them to surfaces with free bicanonical system. More precisely, if we deform the surface to remove one of the base points, we automatically lose also the other.

We think then that the two major questions left in this field are the following.
 \begin{question}\label{question 1}
 Do there exist minimal surfaces of general type whose bicanonical system has base points and $K_S^2 = 3$ resp. $4$?
 \end{question}
 \begin{question}\label{question 2}
Do there exist minimal surfaces of general type whose bicanonical system has an odd number of base points?
\end{question}

Linear systems of divisors which are numerically equivalent to canonical divisors have been largely studied: see for example \cite{bea88}, \cite{gl}, \cite{pirola1}, \cite{pirola2}.
We will call such linear systems {\it twisted canonical systems} although we stress that, in the literature, one often find also the terminology ``paracanonical systems''.
In this article we are interested in linear systems spanned by divisors which are numerically equivalent to a bicanonical divisor, i.e. in {\it twisted bicanonical systems}. 
\vspace{2mm}


Notice that Reider's main theorem in \cite{reider} is a result on the properties of adjoint linear systems $|K+L|$, whose assumptions on $L$ are purely numerical. In particular, Reider's proof shows 
\begin{thm*}[Reider]
If $S$ is a minimal surface of general type with $K^2_S \geq 5$, then all twisted bicanonical systems have no base points.
\end{thm*}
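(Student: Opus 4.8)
The plan is to obtain this as an immediate consequence of Reider's numerical criterion for base-point-freeness of adjoint systems. I will use that theorem in the following form: if $L$ is a nef divisor on a smooth projective surface $S$ with $L^2 \geq 5$ and $x$ is a base point of $|K_S + L|$, then there is an effective divisor $E$ through $x$ with either $L \cdot E = 0$ and $E^2 = -1$, or $L\cdot E = 1$ and $E^2 = 0$. The point is that the hypotheses on $L$ are purely numerical, so they are insensitive to replacing $K_S$ by a divisor merely numerically equivalent to it.

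First I would set up the reduction. Let $|D|$ be a twisted bicanonical system, so $D$ is an effective divisor with $D \equiv 2K_S$. Put $L := D - K_S$; this is a divisor class with $L \equiv K_S$, and $|D| = |K_S + L|$. Since $S$ is minimal of general type, $K_S$ is nef, and numerical equivalence preserves intersection numbers with curves, so $L$ is nef as well; moreover $L^2 = K_S^2 \geq 5$. Hence $L$ satisfies the hypotheses of Reider's theorem, and $|K_S+L|$ is nonempty by assumption, so "base point free" is the correct conclusion to aim for.

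Second, suppose for contradiction that $x$ is a base point of $|K_S + L|$. Applying Reider's theorem yields an effective divisor $E$ with $(L \cdot E, E^2) \in \{(0,-1),\,(1,0)\}$. Since $L \equiv K_S$, in either case $E \cdot K_S + E^2 = E\cdot L + E^2$ equals $-1$ or $1$, in particular it is odd. But $E \cdot K_S + E^2 = E\cdot(E+K_S) = 2p_a(E) - 2$ is even, by the adjunction formula (equivalently, because the canonical class is characteristic for the intersection form, so $E^2 \equiv E\cdot K_S \bmod 2$ for every divisor $E$ on a smooth surface). This contradiction shows that $|K_S+L|$ has no base point.

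There is no real obstacle in this argument; the only points requiring a word of care are that $L$ inherits nefness and $L^2 \geq 5$ from $K_S$ through numerical equivalence — this is exactly what lets Reider's theorem apply to twisted, and not merely honest, bicanonical divisors — and that the parity contradiction is legitimate, for which one invokes that $E^2 + E\cdot K_S$ is even for every divisor $E$. The whole statement is essentially a one-line remark once Reider's theorem is available, which is presumably why it is credited to Reider's proof rather than stated as a new result.
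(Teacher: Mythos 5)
Your proof is correct and is essentially the paper's own argument: the paper merely remarks that Reider's criterion for $|K_S+L|$ has purely numerical hypotheses on $L$, so it applies verbatim to any $L\equiv K_S$, and you have simply written out the standard details (nefness and $L^2\geq 5$ pass through numerical equivalence, and the exceptional divisors $E$ are excluded by the parity of $E^2+K_S\cdot E$). Nothing further is needed.
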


It follows that the questions above deserve to be investigated also for the slightly larger class of twisted bicanonical systems. In particular
\begin{question}\label{question 3}
 Do there exist minimal surfaces of general type with a twisted bicanonical system having base points and $K_S^2 = 3$ resp. $4$?
 \end{question}
 \begin{question}\label{question 4}
Do there exist minimal surfaces of general type with a twisted bicanonical system having an odd number of base points?
\end{question}

Our first result is that the answer to Question \ref{question 3} is positive in the case $K^2=3$.

\begin{thm}
\label{thm: 1}
Let $\M$ be the irreducible family of minimal surfaces of general type with $p_g=0$, $K^2=3$ and fundamental group $\Z_4 \ltimes \Z_4$ constructed in \cite{BFNP13}. Then there is a $2-$torsion $\eta \in \Pic(S)$ not divisible by $2$ such that $|2K+\eta|$ has exactly two simple base points. 
\end{thm}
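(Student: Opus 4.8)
The plan is to reduce the assertion, via the connected \'etale double cover attached to $\eta$, to an explicit computation with the model of $S$ constructed in \cite{BFNP13}.

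First I would recall from \cite{BFNP13} the explicit description of a surface $S \in \M$ and read off its torsion. Since $p_g(S) = q(S) = 0$ one has $\Pic(S)_{\mathrm{tors}} \cong H^2(S,\Z)_{\mathrm{tors}} \cong H_1(S,\Z)$, the abelianization of $\Z_4 \ltimes \Z_4$, which is $\Z_4 \times \Z_2$. Its subgroup $2\,\Pic(S)_{\mathrm{tors}}$ has order $2$, so among the three nonzero classes of order $2$ exactly two are not divisible by $2$; fix one of them and call it $\eta$. Since $\eta$ is numerically trivial and $K_S$ is nef and big, Riemann--Roch together with Kawamata--Viehweg vanishing for a numerically trivial twist give $h^1(S, 2K_S + \eta) = h^2(S, 2K_S + \eta) = 0$, hence $h^0(S, 2K_S + \eta) = \chi(2K_S + \eta) = 1 + K_S^2 = 4$, so $\dim |2K_S + \eta| = 3$; and $|2K_S + \eta| \neq |2K_S|$ because $\eta \neq 0$.

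Next I would pass to the connected \'etale double cover $\pi \colon \ti S \to S$ determined by $\eta$, with deck involution $\sigma$. Then $K_{\ti S} = \pi^* K_S$, so $\ti S$ is minimal of general type with $K_{\ti S}^2 = 6$ and $\chi(\OO_{\ti S}) = 2$; and $\pi_1(\ti S)$, being the index-$2$ subgroup of $\Z_4 \ltimes \Z_4$ cut out by $\eta$, is finite, so $q(\ti S) = 0$ and therefore $p_g(\ti S) = 1$. Since $p_g(\ti S) > 0$, the theorem completed in \cite{cirofab} gives that $|2K_{\ti S}|$ is base point free. By the projection formula $\pi_*\OO_{\ti S} = \OO_S \oplus \eta$, so
\[
H^0(\ti S, 2K_{\ti S}) = H^0(S, 2K_S) \oplus H^0(S, 2K_S + \eta)
\]
is exactly the decomposition into $\sigma$-eigenspaces; write $V^{\pm}$ for the two (four-dimensional) summands, $V^+ = H^0(2K_S)$. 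A point $p \in S$ is a base point of $|2K_S + \eta|$ if and only if every section in $V^-$ vanishes on $\pi^{-1}(p)$, equivalently if and only if the bicanonical morphism $\phi = \phi_{|2K_{\ti S}|}\colon \ti S \to \PP^7$ maps $\pi^{-1}(p)$ into the linear subspace $\PP\big((V^+)^\vee\big)$, one of the two components of $\Fix(\sigma)$ on $\PP^7$. Hence $\pi^{-1}\big(\Bl|2K_S + \eta|\big) = \phi^{-1}\big(\PP((V^+)^\vee)\big) = \Bl|V^-|$, and since $\pi$ is \'etale of degree $2$ it is enough to show that this subscheme of $\ti S$ consists of four reduced points.

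This last point is where the real work lies, and I would settle it on the explicit model: realize the spaces of (twisted) bicanonical forms of $S$ as suitable character eigenspaces of forms on a Galois cover, write down explicit bases of $V^+$ and $V^-$, and compute the common zero locus of the members of $V^-$ (using the equations of \cite{BFNP13}, presumably with the help of a computer algebra system). The expected outcome is that $\phi^{-1}(\PP((V^+)^\vee))$ is a reduced length-$4$ subscheme of $\ti S$ mapping $2\colon 1$ onto a reduced length-$2$ subscheme $\{p_1, p_2\}$ of $S$; ``simple'' then amounts to checking, at each $p_i$, that the image of $V^-$ in $\mathfrak{m}_{p_i}/\mathfrak{m}_{p_i}^2 \cong \C^2$ is everything, e.g.\ by exhibiting a divisor in $|2K_S + \eta|$ smooth at $p_i$. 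The main obstacle is precisely producing manageable equations for $V^-$ and proving that their common zero locus is two reduced points rather than a curve or a fatter zero-dimensional scheme; a smaller point needing care is that the appeal to \cite{cirofab} requires $\ti S$ to be minimal of general type with $q(\ti S)=0$, which is why one checks that $\pi_1(S)$ is finite, and---if $S$ is presented as a resolution of a quotient---that $p_1,p_2$ lie on its smooth locus. I would also observe that the reduction above works verbatim for every nonzero $2$-torsion class, so the arithmetic content of the statement (``exactly two'') is genuinely an output of the computation, special to the $\eta$ not divisible by $2$.
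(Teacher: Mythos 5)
Your overall framework is essentially the one the paper uses: the paper also passes to the intermediate \'etale double cover $S_H=T/H$ with $H=\langle g^2,h\rangle\subset G$, identifies $H^0(2K_S+\eta)$ with the anti-invariant eigenspace (concretely, with the span of six explicit polynomials $U_0,\dots,U_5$ restricted to $T$), and proves non-divisibility of $\eta$ by an argument inside $\Tors(S)\cong\Z/2\Z\times\Z/4\Z$ equivalent to your counting. The genuine gap is that the one step carrying all the content of the theorem --- that the common zero locus of $V^-$ on $S$ consists of exactly two reduced points --- is not proved: you present it as ``the expected outcome'' of a computation you do not set up in an executable form, and you yourself flag that it could a priori be a curve or a fatter zero-dimensional scheme. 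As written, the proposal is a correct reduction plus an unverified claim, so it does not establish the theorem.

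What makes this computation essentially free in the paper, and what your plan is missing, is the observation that the pull-back of $|2K_S+\eta|$ to $T$ is the restriction of a linear system $\cL\subset|\OO_X(2,2,2,2)|$ defined on the whole ambient fourfold $X=(\PP^1)^4$ and spanned by the $U_i$. The base locus of $\cL$ on $X$ was already determined in \cite{BFNP13} to be $64$ explicit points; since every member of $\cL$ (in particular the $Y$ with $T=V\cap Y$) contains them, the base locus of $\cL|_T$ is just the subset of those $64$ points lying on $V$, which one checks to be $32$ of them, forming two free $G$-orbits and hence two base points on $S=T/G$. Eliminating directly on the surface $T\subset(\PP^1)^4$, as you propose, is a much heavier computation and is not carried out. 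Finally, ``simple'' (i.e.\ reducedness of the base scheme at each point) also requires a verification --- e.g.\ exhibiting at each of the two points a member of $|2K_S+\eta|$ smooth there --- which your sketch mentions only as something to be checked.
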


Note that the base points are two, so Question \ref{question 4} remains completely open as well as the following one

 \begin{question}\label{question 5}
What can be said about the base point of twisted pluricanonical systems for minimal surfaces of general type with $p_g>0$?
\end{question}

By \cite[Theorem 5.6]{BFNP13}, $\M$ is an open subset of an irreducible component of the moduli space of surfaces of general type, and therefore in this case the base points can't be removed by a small deformation. 

A surface $S$ in $\M$ has $\Pic(S) \cong H_1(S,\Z) \cong \Z/2\Z \times \Z/4\Z$, and therefore $8$ distinct twisted bicanonical systems, including the bicanonical system. We have explicitly written all of them and computed that all the other $7$ have no base points. Therefore we decided not to mention them in this paper.

Our second result is the explicit description of the corresponding twisted bicanonical map of these surfaces, and of the singularities of the closure of its image. We can summarize it as follows.
\begin{thm}\label{thm: 2}
Let $S$ be a general surface in $\M$ and let $\eta$ be as in Theorem \ref{thm: 1}. Then $\varphi_{|2K_S +\eta|}$ is a birational map onto a tacnodal surface of degree $10$ in $\PP^3$ with $1-$cycle of degree $26$.
\end{thm}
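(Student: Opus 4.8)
The strategy is to analyze the surface $S$ and the twisted bicanonical map $\varphi = \varphi_{|2K_S+\eta|}$ concretely, exploiting the explicit construction of $S$ in \cite{BFNP13} as (the minimal resolution of) a quotient. Since $\M$ consists of surfaces with a $\Z_4\ltimes\Z_4$ action, one realizes $S$ birationally in terms of invariant functions, and the system $|2K_S+\eta|$ corresponds to a distinguished eigenspace of sections on a suitable cover. First I would compute the dimension of $|2K_S+\eta|$: by Riemann--Roch and the fact (from Theorem~\ref{thm: 1}) that $|2K_S+\eta|$ has exactly two simple base points, $h^0(2K_S+\eta)=4$, so $\varphi$ maps to $\PP^3$ and the image is a surface. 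The degree of the image times the degree of $\varphi$ equals $(2K_S+\eta)^2$ minus the contribution of the two base points; since $(2K_S+\eta)^2 = 4K_S^2 = 12$ and each simple base point absorbs $1$, we get $\deg(\varphi)\cdot\deg(\overline{\varphi(S)}) = 10$. So to get Theorem~\ref{thm: 2} one must show $\deg\varphi=1$ (birationality) and hence $\deg\overline{\varphi(S)}=10$.

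\smallskip
\textbf{Birationality.} To prove $\varphi$ is birational I would exhibit enough sections of $|2K_S+\eta|$, together with sections of other (twisted) bicanonical systems or of $|3K_S|$, to separate a general pair of points and a general tangent vector — equivalently, show the subfield of the function field generated by ratios of sections of $|2K_S+\eta|$ is the whole $\C(S)$. Concretely, using the $\Z_4\ltimes\Z_4$-equivariant description, the sections of $|2K_S+\eta|$ pull back to explicit polynomials on the cover; I would identify the corresponding rational map on the cover, descend it, and check injectivity on a general fiber by a direct (possibly Magma-assisted, in the style already set up in the preamble) computation. This is the step I expect to be the main obstacle: controlling the base locus precisely enough and ruling out that $\varphi$ factors through an intermediate quotient requires a careful analysis of the eigenspace decomposition and of the geometry of the two base points.

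\smallskip
\textbf{The singular locus: tacnodes and the $1$-cycle.} Once birationality is established, $\Sigma := \overline{\varphi(S)}\subset\PP^3$ is a degree-$10$ surface, and I would compute its singular $1$-cycle (the generic behaviour of $\varphi$ along curves, i.e. double curves, plus isolated improper points) via the double-point formula. Writing $\mu\colon \ti S\to S$ for the blow-up resolving the two base points and $f\colon \ti S\to\PP^3$ the induced morphism, with $H=f^*\OO_{\PP^3}(1)$, one has $H = \mu^*(2K_S+\eta) - E_1 - E_2$, hence $H^2 = 10$, $H\cdot K_{\ti S}$ and $K_{\ti S}^2$ are determined, and the arithmetic genus / sectional genus of a general hyperplane section $C = f^{-1}(\text{plane})$ follows by adjunction on $\ti S$. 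Comparing the sectional genus of $C$ with the arithmetic genus of a plane curve of degree $10$ via the double point formula for the map $f|_C\colon C\to\PP^2$ yields the degree of the double-point $0$-cycle on a plane section; globalizing (or directly applying the surface double-point formula $f_*[\ti S]\cdot f_*[\ti S] = \ldots$) gives the degree of the double curve of $\Sigma$. I would then pin down that the generic singularity along this curve is a tacnode rather than a node by a local computation at a general point of the double curve — examining the $2$-jet of $f$ transversally — and finally verify the total degree of the singular $1$-cycle is $26$ by accounting for the double curve together with any extra components or pinch/triple points forced by the numerology. The delicate points here are the local classification (tacnode vs.\ node vs.\ worse) at a general double point, and making sure the two exceptional lines over the base points, and their images, are correctly included in the cycle count.
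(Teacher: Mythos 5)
Your numerical skeleton is right ($h^0(2K_S+\eta)=4$, $\deg\varphi\cdot\deg\Sigma=(2K_S)^2-2=10$, and the double point class $6H-K_{\tilde S}$ on the blow-up giving $\tfrac12(60-8)=26$ for the $1$-cycle), and your route differs from the paper's in an instructive way: the paper never tries to separate points. It computes with Magma that the $U_i$ spanning the system satisfy no relation of degree $<10$ and exactly one of degree $10$ (Proposition \ref{PROP:DECIC}), so the image is a degree-$10$ hypersurface, and birationality then falls out of the very product formula you wrote down (Corollary \ref{COR: birational}). Since you already have $\deg\varphi\cdot\deg\Sigma=10$, your plan to prove birationality first by exhibiting separating sections attacks the problem from the harder end; bounding $\deg\Sigma$ from below is the tractable direction, and your ``separate a general pair of points'' step is at present only a restatement of the goal. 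The same Magma computation also hands the paper the explicit equation of $\Sigma_3$, without which your proposed ``local computation at a general point of the double curve'' has nothing to compute with.

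The genuine gap is in the tacnodal claim. First, Definition \ref{def: tacnodal} requires not only the local forms $x^2=y^{2m_i}$ along the non-normal curves but also that the normalization $\tilde\Sigma$ has \emph{only canonical singularities}; your proposal never addresses this, and it is the most delicate step of the paper's Proposition \ref{PROP:TAC2}. The paper handles it by a numerical argument showing $2\E\equiv_{num}3(\tilde\varphi^*\tilde\Pi_2-E_2)+3(\tilde\varphi^*\tilde\Pi_3-E_3)$, so a non-canonical point would force a curve on $V$ inside $\{F_1=U_4=U_j=0\}$, excluded by the pure-dimensionality statement of Lemma \ref{puredim1}. Second, the double point formula only gives the total degree $26$ of the cycle $\sum m_iC_i$; it does not identify the components, their multiplicities $m_i$, or rule out worse-than-$A_{2m_i-1}$ behaviour at general points. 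In fact the singular locus here has seven components (Proposition \ref{PROP:TAC}), only three of which are tacnodal ($m_i=2$) while four are merely nodal, so your phrasing ``pin down that the generic singularity along this curve is a tacnode rather than a node'' suggests a picture that is not the one that actually occurs; the paper settles all of this by a prime decomposition of the Jacobian ideal and Hessian/blow-up checks at explicit points. Without (i) the explicit degree-$10$ equation, (ii) the component-by-component local analysis, and (iii) the canonicity of the singularities of $\tilde\Sigma$, the statement ``tacnodal with $1$-cycle of degree $26$'' is not established.
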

For the definition of tacnodal surfaces and their $1-$cycle see Definition \ref{def: tacnodal}.

Our main motivation for proving Theorem \ref{thm: 2} is that constructing a similar surface in $\PP^3$ we hope to give in future a positive answer to Question \ref{question 1}. Indeed our last result is the following.

\begin{thm}\label{thm: 3}
Let $\Sigma \subset \PP^3$ be a tacnodal surface of degree $10$ with $1-$cycle of degree $26$. Let $\nu \colon \tilde{\Sigma} \rightarrow \Sigma$ be the normalization map, consider its conductor ideal $\I \subset \OO_\Sigma$ and let $\Sigma_0$ be the open set of $\Sigma$ where $\I$ is invertible. Set $\I^{[2]}$ for the sheaf $j_*(\I^2|_{\Sigma_0})$ where $j$ is the inclusion of $\Sigma_0$ in $\Sigma$.


If \[h^0(\I(6))=0, h^0(\I^{[2]}(11))=1, h^0(\I^{[2]}(12))= 4\] 
then the composition of $\nu$ with the minimal resolution of the singularities of $\tilde{\Sigma}$ is the resolution of the indeterminacy locus of the bicanonical map of a minimal surface of general type with $K^2=3$, $p_g=0$ whose bicanonical system has exactly two base points.

Conversely, let $S$ be a surface of general type with $K^2=3$ whose bicanonical system has exactly two base points, and whose bicanonical map is birational. If the closure of the image of the bicanonical map is tacnodal, it is a surface as above.
\end{thm}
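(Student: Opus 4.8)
The plan is to interpret both directions as a dictionary between the geometry of a minimal surface $S$ of general type with $K^2=3$, $p_g=0$ whose bicanonical system $|2K_S|$ has exactly two base points and birational bicanonical map, and the projective geometry of a tacnodal surface $\Sigma\subset\PP^3$ of degree $10$ with $1$-cycle of degree $26$. The bridge is the normalization $\nu\colon\tilde\Sigma\to\Sigma$ and its conductor ideal $\I\subset\OO_\Sigma$: pulling back $\OO_{\PP^3}(1)$ to $\tilde\Sigma$ and then to the minimal resolution $W$ of $\tilde\Sigma$ should recover (a model of) the bicanonical map, so that $\nu^*\OO(1)$ corresponds to $2K_S$ twisted by the base locus, and the conductor on the tacnodal surface matches the $2$-torsion $\eta$ and the two base points of Theorem~\ref{thm: 1}. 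The numerical hypotheses $h^0(\I(6))=0$, $h^0(\I^{[2]}(11))=1$, $h^0(\I^{[2]}(12))=4$ are designed so that the adjunction-type formulas force $p_g=0$, $q=0$, $P_2=4$ (hence $K^2=3$ by Noether) and the bicanonical system has exactly two base points.

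First I would set up the conductor/adjunction machinery: for a surface $\Sigma\subset\PP^3$ of degree $d$ with only the prescribed tacnodal singularities along a curve, the dualizing sheaf of $\tilde\Sigma$ satisfies $\omega_{\tilde\Sigma}=\nu^*\OO_{\PP^3}(d-4)\otimes\I\cdot\OO_{\tilde\Sigma}$ (the adjoint/conductor formula), and more generally, using the reflexive power $\I^{[2]}$ introduced in the statement, one gets $\omega_{\tilde\Sigma}^{[2]}=\nu^*\OO_{\PP^3}(2d-8)\otimes(\I^{[2]})\cdot\OO_{\tilde\Sigma}$ up to terms supported on the singularities of $\tilde\Sigma$. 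With $d=10$ this reads $\omega_{\tilde\Sigma}\sim \nu^*\OO(6)\otimes\I$ and $\omega_{\tilde\Sigma}^{[2]}\sim\nu^*\OO(12)\otimes\I^{[2]}$, so that $h^0(\I(6))=h^0(\omega_{\tilde\Sigma})$ and $h^0(\I^{[2]}(12))=h^0(\omega_{\tilde\Sigma}^{[2]})$; passing to the minimal resolution $W$ these become $p_g(W)$ and $P_2(W)$. Then I would compute the invariants purely from the degree, the degree $26$ of the $1$-cycle, and the local analytic type of a tacnode, getting $K_W^2$ and $\chi(\OO_W)$; the hypothesis $h^0(\I(6))=0$ gives $p_g=0$, and a parallel analysis of $h^0(\I^{[2]}(11))=1$ — which should correspond to $H^0(2K_W-H)$ for the hyperplane class $H=\nu^*\OO(1)$, i.e. to a single distinguished divisor in $|2K_W-H|$ — is what pins the linear system $|H|$ down to being exactly $|2K_W|$ minus its two base points, while $h^0(\I^{[2]}(12))=4$ gives $P_2=4$. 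Together with $q=0$ (forced by $p_g=0$, $\chi=1$ once we know $\chi=1$) this yields $K^2=3$ by Noether's formula, and the discrepancy between $\deg H^2$ on $W$ (which is $10$ adjusted by the $1$-cycle and the exceptional curves) and $(2K_W)^2=12$ is accounted for by exactly two base points, each a simple base point contributing $1$.

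For the converse, I would start from $S$ with the stated properties, let $\sigma\colon\tilde S\to S$ be the blow-up resolving the two base points of $|2K_S|$, so the moving part of $\sigma^*|2K_S|$ is a base-point-free linear system of dimension $P_2(S)-1=3$ whose associated morphism is birational onto its image $\Sigma\subset\PP^3$ of degree $(2K_S)^2-2=10$; then identify $\tilde S$ with the normalization $\tilde\Sigma$ and check that the branch curve of the projection, i.e. the image of the non-injectivity locus, is exactly the tacnodal curve and that its class gives $1$-cycle degree $26$ (this is forced by the hypothesis that the closure of the image is tacnodal, together with the double-point formula applied to the map $\tilde\Sigma\to\PP^3$, using $K^2=3$, $\chi=1$, $p_g=0$). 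Finally one reads the three cohomological equalities back off from adjunction on $\tilde S$, exactly reversing the first part.

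The main obstacle I expect is the careful bookkeeping of the conductor and the reflexive square $\I^{[2]}$ at a tacnode: a tacnode is not a normal crossing, so the conductor is not radical and $\I^{[2]}\ne\I^2$ in general, and one must compute $\nu_*\I$, $\nu_*\I^{[2]}$ and the local contributions to $h^0$, $\chi$ and to the difference between $2K$ and the pullback hyperplane class at each tacnode (and at the finitely many worse points of $\tilde\Sigma$ that survive before the minimal resolution). Getting the local analytic model right — in particular verifying that the adjoint formula $\omega_{\tilde\Sigma}=\nu^*\OO(d-4)\otimes\I$ and its twisted version hold with these exact sheaves, and that the two base points of $|2K_S|$ correspond precisely to the failure of $\I$ to be invertible in the relevant codimension — is the delicate technical heart; everything else is then Riemann–Roch, Noether's formula and the double-point formula. (This is also the step where the erratum mentioned in the acknowledgements presumably intervened, so extra care with signs and with the invertibility locus $\Sigma_0$ is warranted.)
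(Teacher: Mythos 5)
Your overall dictionary is the same as the paper's: $\nu_*\omega_{\tilde\Sigma}=\I(6)$ gives $p_g(\tilde\Sigma)=h^0(\I(6))=0$, $\nu_*\omega_{\tilde\Sigma}^2=\I^{[2]}(12)$ gives $P_2=4$, and $h^0(\I^{[2]}(11))>0$ produces an effective divisor $\EE\in|2K-H|$, so that $\nu$ is given by a subsystem of $|2K|$ and the surface is of general type (this last point needs a word of justification, e.g.\ $dH<2dK$ for all $d$, so $P_{2d}$ grows quadratically). Your converse direction, computing $\deg\Sigma=10$ and reading the three cohomological conditions back off from adjunction, is also essentially the paper's argument (the degree $26$ of the $1$-cycle comes directly from $\deg C=\tfrac12 HD=\tfrac12 H(6H-K_{\tilde\Sigma})$; no double-point formula is needed).

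The genuine gap is in the forward direction, at the step you summarize as ``the discrepancy between $H^2$ and $(2K)^2=12$ is accounted for by exactly two base points.'' Everything hinges on determining the structure of the effective divisor $\EE\in|2K_{\tilde S}-H|$ on the minimal resolution $\tilde S$: one must prove $\EE=3(E_1+E_2)$ for two disjoint $(-1)$-curves with $HE_i=1$, which simultaneously identifies the minimal model $S$, gives $K_S^2=\tfrac14(H+\EE)^2=3$, and shows that $|2K_S|$ has no fixed part and exactly two base points. This does not follow from the numerology you invoke: with only $\deg\Sigma=10$, $\deg C=26$ and $h^0(\I^{[2]}(11))>0$ (hence $H\EE=110-4\cdot 26=6$) there are three genuinely different outcomes --- $\tilde S$ already minimal with $4\le K^2\le 6$, $\tilde S$ a one-point blow-up of a minimal surface with $K^2=4$, or the desired case --- and only $h^0(\I^{[2]}(12))=4$ (i.e.\ $P_2=4$, whence $K^2_{\min}=3$ by Riemann--Roch on the minimal model, not by Noether's formula) selects the last one. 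Even granting $K^2_{\min}=3$, passing to ``$\EE=3(E_1+E_2)$ and exactly two base points'' requires the full intersection-theoretic case analysis of the paper's Proposition \ref{prop: list}: Hodge index applied to $\EE$ and to the residual $\EE_1$, parity constraints from the genus formula, the fact that a $(-1)$-curve occurs with multiplicity at least $3$ in the fixed part of $|2K_{\tilde S}|$, and the elimination of a residual divisor $Z$ orthogonal to $H$. None of this appears in your proposal, and it is the technical heart of the theorem; the local bookkeeping at the tacnodes that you flag as the main obstacle is, by contrast, absorbed into the definition of $\I^{[2]}$ and the standard conductor formulas.
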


This reduces Question \ref{question 1}, under some reasonable assumptions as the birationality of the bicanonical map, to the problem of constructing a suitable curve in $\PP^3$. We are not able to determine whether such a curve exists yet.

\medskip 

The paper is structured as follows.

In section \ref{family} we describe the family $\M$ constructed in \cite{BFNP13}. 

In section \ref{twistedbicanonical} we prove Theorem \ref{thm: 1}, which we split there in two distinct statements, namely propositions \ref{PROP:2TORSION} and \ref{PROP:2BP}. 

Section \ref{twistedbicanonical map} is devoted to the study of the twisted bicanonical map of these surfaces and to a detailed study of the singular locus of its image, supported on the union of $7$ irreducible curves. In particular we prove Theorem \ref{thm: 2}, which is given by Proposition \ref{PROP:DECIC2} and Proposition \ref{PROP:TAC2}.

In section \ref{towards} we prove Theorem \ref{thm: 3}, by proving Corollary \ref{cor: the one} and Proposition \ref{prop: final}.

\section{The family}\label{family}
We first recall the construction of the  $4-$dimensional family $\M$ of minimal surfaces of general type with $p_g=q=0, K_S^2=3$ and $\pi_1\simeq \Z_4\ltimes \Z_4$ constructed in \cite{BFNP13}.

Consider the product $X$ of $4$ copies of $\PP^1$. This is a Fano fourfold whose automorphisms group is 
$\Aut(X)\simeq \PGL(2)^{4}\rtimes S_4$. 
We consider the automorphisms
\begin{equation}\label{g+h}
g=(\id,A,\id,A)\circ (12)(34)\quad\mbox{ and }\quad h=(\id,A,B,AB) \circ (13)(24)
\end{equation}
where $A$ and $B$ are the automorphisms of $\PP^1$ that are represented,
respectively, by \[(x_0:x_1)\stackrel{A}{\longmapsto}(x_0:-x_1)\quad\mbox{ and }\quad(x_0:x_1)\stackrel{B}{\longmapsto}(x_1:x_0),\] where
$(x_0:x_1)$ are projective coordinates on $\PP^1$.
Both automorphisms have order $4$, and $gh=hg^{3}$; indeed the generated subgroup $G=\spa{g,h} \subset \Aut(X)$ is isomorphic to $\Z_4\ltimes \Z_4$.

We consider $X$ with coordinates $\ux:=(\ux_1,\ux_2,\ux_3,\ux_4)$, where $\ux_i=(x_{i0}:x_{i1})$ are the standard projective coordinates on the $i^{th}$-factor.
In (\cite[4.1]{BFNP13}) it is shown that $G$ acts freely outside of $48$ points. All of them have a cyclic stabilizer of order $2$ or $4$, so all are stabilized by a single element of order $2$ in $G$. Each of the three elements of order $2$ in $G$ stabilizes exactly $16$ points. More precisely: 
\begin{equation}\label{fixed points}
\renewcommand{\arraystretch}{1.35}
\begin{array}{l}
 \Fix(g^2)=\{\ux \quad|\quad \ux_i\in\{(0:1),(1:0)\}\,\},\\
\Fix(h^2)=\{\,\ux \quad|\quad \ux_i\in\{(1:1),(1:-1)\}\,\}\\
\Fix(g^2 h^2)=\{\,\ux \quad|\quad \ux_i\in\{(1:i),(1:-i)\}\,\}.
\end{array}
\end{equation}

The action of $G$ on $X$ induces an action on $|\OO_X(1,1,1,1)|$, which can be lifted to $H^0(\OO_X(1,1,1,1))$, although there is not a natural way to do that. Indeed, as $H^0(\OO_X(1,1,1,1))$ is a set of polynomials in the variables $x_{ij}$, (\ref{g+h}) gives natural liftings $g^*,h^* \colon H^0(\OO_X(1,1,1,1))\rightarrow H^0(\OO_X(1,1,1,1))$. To be more precise, for $a,b,c,d \in \{0,1\}$ we set $a'=1-a,b'=1-b,c'=1-c,d'=1-d$ the respective complements: then the natural liftings would be \[g^*x_{1a}x_{2b}x_{3c}x_{4d}=(-1)^{b+d}x_{1b}x_{2a}x_{3d}x_{4c},\]
\[h^*x_{1a}x_{2b}x_{3c}x_{4d}=(-1)^{b'+d}x_{1c}x_{2d}x_{3a'}x_{4b'}.\]
They have both order $4$, but they do not define a lifting of $G$ as $g^*h^* =- h^*(g^*)^3$.

This can be fixed in $8$ different ways, twisting $g^*$ with the multiplication by $\pm i$ and $h^*$ with the multiplication by $i^d$, where $i \in \C$ is as usual a square root of $-1$. This gives $8$ different liftings of the $G$-action to $H^0(\OO_X(1,1,1,1))$. They are all isomorphic to the regular representation: indeed once chosen one of them, they are all obtained by it by twisting by the $8$ degree $1$ representations of $G$.

We choose the following lifting
\begin{defin}
Define $\rho_1 \colon G \rightarrow \Aut(\HH^0(\OO_X(1,1,1,1)))$ given by the liftings
\begin{eqnarray*}
\rho_1(g) x_{1a}x_{2b}x_{3c}x_{4d}=i(-1)^{b+d}x_{1b}x_{2a}x_{3d}x_{4c},\\
\rho_1(h) x_{1a}x_{2b}x_{3c}x_{4d}=(-1)^{b+d}x_{1c}x_{2d}x_{3a'}x_{4b'}.
\end{eqnarray*}
\end{defin}
The $1-$dimensional invariant subspace  $\HH^0(\OO_X(1,1,1,1))^{G}$ is generated by
\[F_1:=(x_{20}x_{30}-x_{21}x_{31})(x_{11}x_{40}+x_{10}x_{41})-i(x_{20}x_{31}-x_{21}x_{30})(x_{10}x_{40}+x_{11}x_{41}).\]
Its zero locus $V$ is (\cite[Lemma 5.2]{BFNP13}) a singular Fano threefold whose singular locus is contained in $\Fix(h^2)$.

The multiplication map $\mu_{1,1} \colon \HH^0(\OO_X(1,1,1,1))^{\otimes 2} \rightarrow  \HH^0(\OO_X(2,2,2,2))$
is surjective, and therefore there is a unique $G-$action $\rho_2$ on $\HH^0(\OO_X(2,2,2,2))$ such that $\mu_{1,1}$ is  $G-$equivariant. 

We consider the subspace $\HH^0(\OO_X(2,2,2,2))^{-,+}$ of the elements $u$ such that \[\rho_2(g)u=-u\quad \mbox{ and }\quad \rho_2(h)u=u.\] This is the subspace generated by the $Q_i$ in \cite[4.1]{BFNP13}; we give here a different basis, as it will make some formulas in the rest of the paper simpler:
\begin{equation}\label{LISTOFQUADRICS}
\renewcommand{\arraystretch}{1.35}
\begin{array}{ll}
U_0 := &x_{20}x_{21}x_{30}x_{31}(x_{10}^2+x_{11}^2)(x_{40}^2+x_{41}^2)\\&-x_{10}x_{11}x_{40}x_{41}(x_{20}^2+x_{21}^2)(x_{30}^2+x_{31}^2) \\
U_1 :=& x_{10}x_{11}x_{30}x_{31}(x_{20}^2+x_{21}^2)(x_{40}^2+x_{41}^2)\\&+x_{20}x_{21}x_{40}x_{41}(x_{10}^2+x_{11}^2)(x_{30}^2+x_{31}^2) \\
U_2 :=& (x_{10}^2x_{20}^2+x_{11}^2x_{21}^2)(x_{30}^2x_{40}^2+x_{31}^2x_{41}^2) \\
U_3 :=& (x_{11}^2x_{20}^2 +x_{10}^2x_{21}^2)(x_{31}^2 x_{40}^2+ x_{30}^2x_{41}^2) \\
U_4 :=& 4\textstyle\prod_{i=1}^4 x_{i0}x_{i1} \\
U_5 :=& U_0+\frac{1}{2}U_2+\frac{1}{2}U_3+U_4-\frac{1}{2}\prod_{i=1}^4(x_{i0}^2+x_{i1}^2)
\end{array}
\end{equation}
The base points of the induced linear system $\cL\subset |\OO_X(2,2,2,2)|$ are (\cite[Lemma 4.1]{BFNP13}) the following $64$ points: 
\begin{equation}
\label{LISTOFBASEPTS}
\renewcommand{\arraystretch}{1.30}
\begin{array}{l}
\{\ux \quad |\quad \ux_1,\ux_2\in\{(0:1),(1:0)\}, \ux_3\in\{(1:\pm 1)\},\ux_4\in\{(1:\pm i)\} \}\cup \\
\{ \ux \quad |\quad \ux_1,\ux_2\in\{(0:1),(1:0)\}, \ux_3\in\{(1:\pm i)\},  \ux_4\in\{(1:\pm 1)\}  \} \cup \\
\{ \ux \quad| \quad \ux_1\in\{(1:\pm 1)\},  \ux_2\in\{(1:\pm i)\},  \ux_3,\ux_4\in\{(0:1),(1:0)\}  \} \cup \\ 
\{ \ux \quad |\quad \ux_1\in\{(1:\pm i)\},  \ux_2\in\{(1:\pm 1)\},  \ux_3,\ux_4\in\{(0:1),(1:0)\}  \}
\end{array}
\end{equation}
The generic $Y \in \cL$ is  (\cite[Corollary 4.2]{BFNP13}) a smooth Calabi-Yau threefold.
The generic intersection $T=V\cap Y$ is  (\cite[Theorem 5.3]{BFNP13})  a simply connected canonical model of a surface of general type with $p_g=15$, $q=0$ and $K_T^2=48$ which is equipped with a free $G-$action. Its quotient $S=T/G$ is hence the canonical model of a surface of general type with $p_g=q=0$, $K_S^2=3$ and $\pi_1(S)\simeq \Z_4\ltimes \Z_4$. 

We will need later the following

\begin{lem}\label{puredim1}
$\{F_1=U_4=U_2=0\}$ and $\{F_1=U_4=U_3=0\}$ are subschemes of $X$ of pure dimension $1$.
\end{lem}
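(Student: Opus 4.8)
The plan is to show that the three defining equations behave, at least generically, like a regular sequence, so that the only thing to exclude is excess-dimensional components. First I would invoke Krull's height theorem: since each of the two schemes is cut out by three equations in the irreducible $4$-fold $X=(\PP^1)^4$, every irreducible component has codimension at most $3$, hence dimension at least $1$; moreover the scheme is nonempty, since $\OO_X(1,1,1,1)$ and $\OO_X(2,2,2,2)$ are ample and three effective ample divisors always meet on a fourfold. It therefore suffices to rule out components of dimension $\ge 2$: once this is known the scheme has pure codimension $3$ in the Cohen--Macaulay variety $X$, so the three sections form a regular sequence, the scheme is a complete intersection, in particular unmixed with no embedded components, and hence pure of dimension $1$.

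Next I would exploit the $G$-symmetry to reduce the size of the problem. The linear subspaces $\HH^0(\OO_X(1,1,1,1))^G$ and $\HH^0(\OO_X(2,2,2,2))^{-,+}$ are acted on by $G$ through a character (by the very definition $\rho_1(g)F_1,\rho_1(h)F_1,\rho_2(g)U_i,\rho_2(h)U_i$ are scalar multiples of $F_1,U_i$), so the hypersurfaces $\{F_1=0\}$, $\{U_2=0\}$, $\{U_3=0\}$, $\{U_4=0\}$ are all $G$-invariant, and so are the two schemes in the statement. On the other hand $U_4=4\prod x_{i0}x_{i1}$, so $\{U_4=0\}=\bigcup_{i,j}H_{ij}$ with $H_{ij}:=\{x_{ij}=0\}$, and a direct check on the generators $g,h$ shows that $G$ permutes the eight hyperplanes $H_{ij}$ transitively (e.g. $g$ sends $H_{11}$ to $H_{21}$ and $h$ sends $H_{11}$ to $H_{30}$, and iterating reaches all eight). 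A hypothetical component $W$ of dimension $\ge 2$ is irreducible and lies in $\{U_4=0\}$, hence in some $H_{ij}$; translating by a suitable element of $G$ we may assume $W\subseteq H_{11}$. So it is enough to show that $\{F_1=0\}\cap\{U_2=0\}$, and likewise $\{F_1=0\}\cap\{U_3=0\}$, contains no surface inside $H_{11}$.

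Finally I would run the computation on $H_{11}\cong(\PP^1)^3$, with homogeneous coordinates $\ux_2,\ux_3,\ux_4$ and $x_{10}$ now a unit. Restricting, $F_1|_{H_{11}}$ equals $x_{10}$ times the multidegree-$(1,1,1)$ form $f:=(x_{20}x_{30}-x_{21}x_{31})x_{41}-i(x_{20}x_{31}-x_{21}x_{30})x_{40}$; writing $f$ as a linear form in any one of the three pairs of variables, its two coefficients have disjoint monomial supports, so $f$ has no factor of multidegree $(1,0,0)$, $(0,1,0)$ or $(0,0,1)$, hence $f$ is irreducible. On the other hand $U_2|_{H_{11}}=x_{20}^2(x_{30}x_{40}-ix_{31}x_{41})(x_{30}x_{40}+ix_{31}x_{41})$ and $U_3|_{H_{11}}=x_{21}^2(x_{31}x_{40}-ix_{30}x_{41})(x_{31}x_{40}+ix_{30}x_{41})$ are products of irreducible forms of multidegrees $(1,0,0)$ and $(0,1,1)$; none of these divides $f$ (compare multidegrees for the quadratic factors, and a one-line substitution shows $x_{20},x_{21}\nmid f$). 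Since $H_{11}$ is factorial, coprimality of the restricted forms forces their common zero locus to have pure codimension $2$ in $H_{11}$, hence dimension $1$; so no surface is contained in it, completing the argument.

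The main obstacle is not conceptual but lies in the bookkeeping of this last step: restricting to $H_{11}$ introduces spurious coordinate factors (the unit $x_{10}$ in $F_1$, and $x_{20}^2$ resp. $x_{21}^2$ in $U_2$ resp. $U_3$), and one must verify that these are harmless — that $\{x_{10}=x_{11}=0\}=\emptyset$ and that $x_{20},x_{21}$ do not divide $f$ — before concluding coprimality. The one point that genuinely has to be checked rather than read off is the irreducibility of the multidegree-$(1,1,1)$ form $f$; everything else is routine, and the whole verification could alternatively be delegated to a computer algebra system of the kind already used elsewhere in the paper.
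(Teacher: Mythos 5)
Your proof is correct, and it reaches the conclusion by a genuinely different route than the paper. The paper's argument is a direct enumeration: it decomposes $\{U_4=U_2=0\}$ into its twelve two-dimensional components (read off from the factorizations of $U_4$ and $U_2$ in \eqref{LISTOFQUADRICS}) and checks that none of them is contained in $V=\{F_1=0\}$, so that intersecting with $V$ cuts each down to dimension $1$. You instead combine three ingredients the paper does not use: Krull's height theorem for the lower bound on the dimension, the $G$-equivariance of all four hypersurfaces together with the transitivity of $G$ on the eight coordinate hyperplanes in $\{U_4=0\}$ to reduce to the single slice $H_{11}\cong(\PP^1)^3$, and then a coprimality argument in the (factorial) Cox ring of $(\PP^1)^3$, whose only nontrivial input is the irreducibility of the tridegree-$(1,1,1)$ form $f=F_1|_{H_{11}}/x_{10}$ — which your disjoint-support argument does establish. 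What each approach buys: the paper's is shorter and entirely mechanical (no irreducibility check, no group action), essentially the case-by-case version of your slice computation carried out for all eight hyperplanes at once; yours trades the enumeration for symmetry and a single irreducibility verification, and as a bonus your Cohen--Macaulay/unmixedness remark actually justifies purity in the scheme-theoretic sense (absence of embedded components), a point the paper's one-line conclusion passes over. The only blemishes are cosmetic: whether $h$ carries $H_{11}$ to $H_{30}$ or to $H_{31}$ depends on the direction convention for the lifting, but either way the orbit is all eight hyperplanes, so nothing is affected.
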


\begin{proof}

By (\ref{LISTOFQUADRICS}), $\{U_4=U_2=0\}$ has $12$ irreducible components all of dimension $2$:  two of the form $\{x_{1a}=x_{2a'}=0 \}$ ($a\in \{0,1\}$), two of the form $\{x_{3a}=x_{4a'}=0\}$, four of the form $\{x_{na}=x_{30}^2x_{40}^2+x_{31}^2x_{41}^2=0\}$   ($a\in \{0,1\}$ , $n\in \{1,2\}$) and four of the form $\{x_{na}=x_{10}^2x_{20}^2+x_{11}^2x_{21}^2=0\}$   ($a\in \{0,1\}$ , $n\in \{3,4\}$). None of these surfaces is contained in $V$, so $\{F_1=U_4=U_2=0\}$ has pure dimension $1$. 

The proof that $\{F_1=U_4=U_3=0\}$ has pure dimension $1$ is similar.
\end{proof}

\section{A twisted bicanonical system with base points}\label{twistedbicanonical}
\begin{prop}
\label{PROP:2TORSION}
Let $S=T/G$ be an element in $\M$. Then, the system $\cL_{|T}$ is the pull-back of $|2K_S+\eta|$ where $\eta$ is a $2-$torsion element in $\Z/2\Z \times \Z/4\Z \cong \Tors(S) \subset \Pic(S)$ not divisible by $2$.
\end{prop}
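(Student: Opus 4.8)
The plan is to descend the linear system $\mathcal{L}_{|T}$ along the étale quotient $\pi\colon T\to S=T/G$ and identify the resulting line bundle on $S$. Since $G$ acts freely on $T$ (as $T$ is a general member of $\cL$ meeting $V$ away from the $48$ fixed points, by the cited results in \cite{BFNP13}), $\pi$ is an étale $G$-cover of degree $16$, so $\Pic(T)$ carries a $G$-action and $\pi^*$ identifies $\Pic(S)$ with the $G$-invariant part in a suitable sense; more precisely, a $G$-linearized line bundle on $T$ descends to a line bundle on $S$, and the descent data is exactly the choice of lift $\rho_2$ of the $G$-action. First I would observe that $\OO_X(2,2,2,2)|_T$ restricts to $\OO_T(K_T)$ by adjunction: indeed $T=V\cap Y$ is a complete intersection of type $(1,1,1,1)$ and $(2,2,2,2)$ in $X=(\PP^1)^4$, whose canonical bundle is $\OO_X(-2,-2,-2,-2)$, so $K_T=\OO_X(-2,-2,-2,-2)+\OO_X(1,1,1,1)+\OO_X(2,2,2,2)|_T=\OO_X(1,1,1,1)|_T$. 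Wait — that gives $K_T = \OO_X(1,1,1,1)|_T$, hence $2K_T=\OO_X(2,2,2,2)|_T$, which is the line bundle underlying $\mathcal{L}_{|T}$. So $\mathcal{L}_{|T}\subset |2K_T|$, and since $K_T$ descends to $K_S$ (canonical bundles are canonically linearized and $\pi$ is étale, $K_T=\pi^*K_S$), the bundle $\OO_X(2,2,2,2)|_T=2K_T=\pi^*(2K_S)$, but with the $G$-linearization $\rho_2$ which differs from the canonical one on $2K_S$ by a character-twisted line bundle, i.e. by a $2$-torsion element $\eta\in\Pic(S)$.

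Next I would make $\eta$ explicit. The canonical linearization of $2K_T$ corresponds to the trivial character; the linearization $\rho_2$ we actually use was defined as the unique one making $\mu_{1,1}$ equivariant for $\rho_1$, and $\rho_1$ was one specific choice among the $8$ liftings of the $G$-action on $H^0(\OO_X(1,1,1,1))$. So $\rho_2=\rho_1^{\otimes 2}$ (as the induced action on the symmetric square), and the difference between $\rho_2$ and the canonical linearization of $2K_S$ pulled back is governed by the square of the character by which $\rho_1$ differs from the canonical linearization of $K_S$. The key point is that the resulting class $\eta\in H^1(G,\C^*)\cong\widehat{G}$, pushed into $\Pic(S)_{\mathrm{tors}}\cong H_1(S,\Z)\cong \Z/2\Z\times\Z/4\Z$, is a nonzero element killed by $2$ but not lying in $2\cdot\Pic(S)$. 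I would compute the relevant character of $G$ acting on the one-dimensional space $H^0(V,\mathcal{L}_{|V})$ relative to the canonical action on $H^0(V,2K_V)$ (or directly on $H^0(T,\cdot)$), using the explicit formulas for $\rho_1(g),\rho_1(h)$ on monomials: the factor of $i$ twisting $g$ contributes an order-$4$ discrepancy on the $H^0(1,1,1,1)$ level, which squares to an order-$2$ discrepancy on the $2K$ level, while the $\mathbb{Z}/4$-generator structure forces $\eta$ to be one of the two order-$2$ elements, and a direct check with the monomial action shows it is the one not in $2\cdot(\Z/2\times\Z/4)=\{0\}\times\{0,2\}$, i.e. $\eta$ is the nontrivial element of the first $\Z/2$ factor (possibly plus $(0,2)$).

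The main obstacle will be this last bookkeeping: keeping straight the three different places a character can enter — the choice of $\rho_1$ among $8$ liftings, the passage to $\rho_2=\Sym^2\rho_1$, and the identification $\widehat{G}\hookrightarrow\Pic(S)$ via the étale cover — and verifying that the net result is a $2$-torsion class that is genuinely not divisible by $2$. Divisibility by $2$ would mean $\eta=2\xi$ for some $\xi$, and since $\Tors(S)\cong\Z/2\times\Z/4$ has $2\cdot\Tors(S)=\{(0,0),(0,2)\}$, I must show $\eta\notin\{(0,0),(0,2)\}$; equivalently, that the corresponding character of $G=\Z_4\ltimes\Z_4$ does not factor through the subgroup index relevant to the $\Z/4$ summand. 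This I expect to handle by a direct representation-theoretic computation: evaluate $\rho_2$ on the invariant quadric generating $H^0(\OO_X(2,2,2,2))^{G}$ versus on the quadrics $U_i\in H^0(\OO_X(2,2,2,2))^{-,+}$, read off the character, and match it against the abelianization $G^{ab}\cong\Z/2\times\Z/4$ and the known description of $\Pic(S)_{\mathrm{tors}}$ from \cite{BFNP13}. Everything else — étaleness of $\pi$, the adjunction computation $K_T=\OO_X(1,1,1,1)|_T$, and descent of line bundles along free quotients — is standard and I would only cite or state it.
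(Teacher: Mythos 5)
Your overall strategy is sound and genuinely different in flavour from the paper's. You work throughout with linearizations: $2K_T=\OO_X(2,2,2,2)|_T$ by adjunction, $\rho_2$ gives one linearization of this bundle, the canonical linearization (pull-back of forms) gives another, the two differ by the character $\psi^2$ where $\psi$ is the character by which $\rho_1$ differs from the canonical linearization of $K_T$, and the $U_i|_T$ are $\rho_2$-eigenvectors for the character $\chi$ with $\chi(g)=-1$, $\chi(h)=1$; hence they descend to sections of $2K_S+\eta$ with $\eta$ corresponding to $\chi\psi^{-2}\in\Hom(G,\C^*)$. Since every character of $G$ takes values $\pm1$ on $g$ (because $g^2=[g,h]$), $\psi^{\pm2}$ is trivial on $g$, so $(\chi\psi^{-2})(g)=-1$: this already shows $\eta$ has order $2$ and is not a square in $\Hom(G,\C^*)\cong\Tors(\Pic(S))$, i.e.\ not divisible by $2$ --- independently of what $\psi$ actually is. That is arguably cleaner than the paper's route, which instead pins down the canonical representation on $H^0(\OO_T(K_T))$ exactly (via the holomorphic Lefschetz fixed point formula it is $\C[G]/\C$, and only the trivial character twist preserves that isomorphism class, so $\rho_1$ \emph{is} the canonical lift), then realizes $\eta$ geometrically through the intermediate quotient $T/H$ with $H=\langle g^2,h\rangle=\ker\chi$, and excludes divisibility by $2$ through a subgroup argument.

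The one step of your write-up that would fail as stated is the proposed method for controlling the discrepancy: ``evaluate $\rho_2$ on the invariant quadric versus on the $U_i$ and read off the character'' only recovers the $\rho_2$-eigencharacters of those sections, which you already know by construction; it gives no access whatsoever to the canonical linearization, which is defined by pull-back of holomorphic forms and not by any formula on monomials. To compare the two you need an independent geometric input --- either the Lefschetz computation the paper uses, or an explicit computation of the action of $g$ and $h$ on a local generator of $K_T$ at a point of $T$. Fortunately, as noted above, your own observation that the discrepancy is the \emph{square} of a character makes the exact value of $\psi$ irrelevant for the statement being proved, so you should restructure the argument around that and drop the ``direct check with the monomial action''. (A smaller slip: ``the factor of $i$ twisting $g$ contributes an order-$4$ discrepancy'' conflates the factor $i$ inside the definition of $\rho_1(g)$ --- needed only to make $\rho_1$ a homomorphism --- with the discrepancy character $\psi$, whose value on $g$ necessarily has order at most $2$.)
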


\begin{proof}
First of all, recall that the pull-back of holomorphic 2-forms yields a canonical lift of the $G-$ action on $T$ to a representation of $G$ on $H^0(\OO_T(K_T))$, which we will call {\it the canonical representation} of $G$ on $H^0(\OO_T(K_T))$. This is, by the Lefschetz holomorphic fixed point formula, isomorphic to the regular representation minus the trivial representation, say $\C[G]/\C$. Similarly, $\forall d$ the canonical representation of $G$ on $H^0(\OO_T(dK_T))$ is the one induced by imposing the equivariance of the tensor product of sections.

The restriction map $H^0(\OO_X(1,1,1,1)) \rightarrow H^0(\OO_T(1,1,1,1) \cong \OO_T(K_T))$ is surjective with kernel $\langle F_1 \rangle=H^0(\OO_X(1,1,1,1))^G$, so $\rho_1$ induces a lifting of the $G-$ action on $T$ to a representation of $G$ on $H^0(\OO_T(K_T))$ which is also isomorphic to $\C[G]/ \C$. Since two liftings differ by the twist by a character, and only the trivial character preserves $\C[G]/ \C$, $\rho_1$ induces the canonical representation of $G$ on $H^0(\OO_T(K_T))$. Consequently, $\rho_2$ induces the canonical representation of $G$ on  $H^0(\OO_T(2K_T))$.

We denote by $H \subset G$ the subgroup generated by $g^2$ and $h$; note that, for any action of $G$ on  a vector space $V$, the elements in $V^{-,+}$, the space of $h-$invariant vectors such that $g$ acts on them as the multiplication by $-1$, is $H-$invariant and more precisely
$
V^H= V^G \oplus V^{-,+}.
$

Let $S_{H}$ be the quotient $T/H$. 
Then $H^0(\OO_{S_H}(2K_{S_H}))$ pulls back to 
\[H^0(\OO_T(2K_T))^H=H^0(\OO_T(2K_T))^G \oplus \left\langle U_i \right\rangle.\]
On the other hand, the induced map $\pi \colon S_H \rightarrow S$ is an \'etale double cover of $S$, so it determines  a $2-$torsion $\eta \in \Pic(S)$, direct summand of the direct image of $\OO_{S_H}$. The induced involution $S_H \rightarrow S_H$  splits $H^0(\OO_{S_H}(2K_{S_H}))$ as direct sum of  its invariant part, the pull-back of  $H^0(2K_S)$, and its antiinvariant part, pull-back of  $H^0(2K_S + \eta)$, and so $H^0(2K_S + \eta)$ pulls back on $T$ to $\left\langle U_i \right\rangle$.

Finally, let's check that $\eta$ is not divisible by $2$. The only 2-torsion divisible by $2$ is contained in both the cyclic subgroups of $\Tors(S)$ of order $4$, yielding two cyclic covers of degree $4$ of $S$ factoring through $\pi$: these are the quotients of $T$ by the two normal subgroups of $G$  with quotient $\Z/4\Z$, that are $\langle g \rangle$ and $\langle gh^2 \rangle$, which should be then both contained in $H$, forcing $H=\langle g,h^2\rangle$, contradiction.
\end{proof}

\begin{prop}
\label{PROP:2BP}
Let $S$ be as in Proposition \ref{PROP:2TORSION}. The system $|2K_S+\eta|$ has exactly $2$ distinct base points.
\end{prop}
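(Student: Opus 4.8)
By Proposition \ref{PROP:2TORSION}, the pull-back of $|2K_S+\eta|$ to $T$ is the restriction to $T$ of the linear system $\cL$ spanned by the six quadrics $U_0,\dots,U_5$ of \eqref{LISTOFQUADRICS}; equivalently, it is the restriction to $T$ of the system $\langle U_i\rangle$ cut out on $V$. Since $T=V\cap Y$ for generic $Y\in\cL$, and the base points of $\cL$ on $X$ (hence on $V$) are the $64$ points listed in \eqref{LISTOFBASEPTS}, the base locus of $|2K_S+\eta|$ on $S$ is exactly the image in $S=T/G$ of the set $\mathrm{Bs}\big(\langle U_i\rangle|_V\big)$, which a priori contains the intersection of those $64$ base points with $V$ but could in principle be larger (a base point of $\langle U_i\rangle|_V$ need not be a base point of $\cL$ on $X$, since restricting to $V$ can only decrease the system). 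So the first, and I expect the main, step is to show that the base locus of $\langle U_i\rangle$ restricted to $V$ is \emph{finite} and to identify it precisely. The cleanest route: among the $U_i$ one can find a subcollection whose common zero locus on $X$ is already of pure dimension $1$ — Lemma \ref{puredim1} is tailored exactly for this, giving that $\{F_1=U_4=U_2=0\}$ and $\{F_1=U_4=U_3=0\}$ are curves in $X$. Intersecting these two curves (or intersecting one of them further with another $U_i$) inside $V=\{F_1=0\}$ reduces the base locus on $V$ to a finite scheme, which one then computes.

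Once the base locus on $V$ is a finite set of points, I would intersect it with $\Fix$-data and with the list \eqref{LISTOFBASEPTS}. Concretely: a point $p\in V$ lying on $T$ and in the base locus of $|2K_S+\eta|$ pulled back must be a common zero of $F_1$, all $U_i$, and of a generic $Y\in\cL$; genericity of $Y$ forces $p$ to lie in $\mathrm{Bs}(\cL)$ on $X$, i.e. in the $64$-point list \eqref{LISTOFBASEPTS}, \emph{unless} $p$ is a point where restriction to $V$ genuinely created a new base point, i.e. $p$ lies in the finite residual scheme found in the previous step but off the $64$ points. I would check (a finite, explicit verification, e.g. with Magma or by hand on the symmetric coordinates) that this residual contributes nothing new on $V$, so that $\mathrm{Bs}(\langle U_i\rangle|_V)\cap T$ is contained in the $64$ points. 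Then I intersect those $64$ points with $V=\{F_1=0\}$: plugging the coordinate values from \eqref{LISTOFBASEPTS} into $F_1$, the first two blocks (with $\ux_1,\ux_2\in\{(0:1),(1:0)\}$) make the two factors $(x_{11}x_{40}+x_{10}x_{41})$ and $(x_{10}x_{40}+x_{11}x_{41})$ of $F_1$ behave in a controlled way, and similarly for the last two blocks; a direct substitution shows exactly which of the $64$ lie on $V$.

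Next I would use the $G$-action. The group $G$ acts freely on $T$, permuting the finitely many base points among themselves, so the number of base points on $S$ is $(\#\{\text{base points on }T\})/|G|=(\#\{\text{base points on }T\})/16$. Hence it remains to show there are exactly $32$ base points on $T$, equivalently exactly $32$ of the points in \eqref{LISTOFBASEPTS}$\cap V$ lie on the generic $Y$. Since each such point is already a base point of $\cL$, it lies on \emph{every} $Y\in\cL$; thus the count on $T$ is simply $\#\big(\eqref{LISTOFBASEPTS}\cap V\big)$, and I must verify this equals $32$ — i.e. that exactly half of the $64$ listed points satisfy $F_1=0$. The symmetry of the four blocks and the explicit form of $F_1$ should make this a short computation: I expect that in each block exactly half the points kill $F_1$. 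Finally, "distinct" and "exactly $2$" on $S$: the $32$ points on $T$ form two free $G$-orbits, so they descend to $2$ points of $S$; I would confirm transitivity of $G$ on each half by exhibiting, for a chosen base point, its stabilizer is trivial (automatic since $G$ acts freely on $T$) and that the orbit has size $16$, and that the two orbits are genuinely different (not swapped into one another would give $1$ point — so I check there is no single $16$-element orbit, e.g. by a parity/coordinate invariant distinguishing the two halves).

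**Main obstacle.** The delicate point is the very first step: controlling the base locus of $\langle U_i\rangle$ \emph{after} restriction to $V$, since passing from $X$ to the hypersurface $V$ can create base points not present in \eqref{LISTOFBASEPTS}. Lemma \ref{puredim1} reduces this to a finite check, but one must be careful that the finite residual scheme (the extra points cut out on the curves $\{F_1=U_4=U_2=0\}$, $\{F_1=U_4=U_3=0\}$ by the remaining quadrics) either lies among the $64$ points or is disjoint from the generic $Y=\{(\text{generic combination of }U_i)=0\}$; establishing this cleanly, rather than by brute force, is the crux.
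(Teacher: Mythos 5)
Your overall architecture (intersect the $64$ base points of $\cL$ with $V$, obtain $32$ points, then quotient by the free $G$-action to get $2$) is exactly the paper's proof, which is two lines long. But the step you single out as the crux --- controlling a putative ``residual'' base locus created by restricting $\langle U_i\rangle$ to $V$ --- rests on a misconception, and Lemma \ref{puredim1} plays no role here (in the paper it is used only much later, in the proof of Proposition \ref{PROP:TAC2}). Set-theoretically, the base locus of a restricted linear system is \emph{always} the intersection of the base locus with the subvariety: a point $p\in V$ (or $p\in T$) is a base point of $\langle U_i\rangle|_V$ if and only if every $U_i$ vanishes at $p$, if and only if $p\in\{U_0=\dots=U_5=0\}=\mathrm{Bs}(\cL)$, i.e.\ $p$ is one of the $64$ points of (\ref{LISTOFBASEPTS}). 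Restriction can lower the dimension of the system (indeed $U_5$ restricts to $0$ on $V$), but it cannot create base points on the subvariety that were not already base points on $X$; your sentence ``a base point of $\langle U_i\rangle|_V$ need not be a base point of $\cL$ on $X$'' is false. So there is no residual scheme to analyze, and the only use of the genericity of $Y$ is the trivial remark that the $64$ points, being base points of $\cL$, lie on every $Y\in\cL$, whence $T\cap\mathrm{Bs}(\cL)=V\cap\mathrm{Bs}(\cL)$. (That the pull-back of the \emph{complete} system $|2K_S+\eta|$ to $T$ is all of $\langle U_i\rangle|_T$, not a proper subsystem of something larger, is exactly what Proposition \ref{PROP:2TORSION} provides, so no discrepancy can arise on that side either.)

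What actually remains, and what you leave as an expectation rather than verify, is the count: exactly $32$ of the $64$ points of (\ref{LISTOFBASEPTS}) satisfy $F_1=0$ (a direct substitution), and $G$ acts freely on them --- which holds because the $64$ points are disjoint from the fixed loci (\ref{fixed points}): every point of $X$ with nontrivial stabilizer is fixed by one of the three involutions, whose fixed points have all four coordinates of the same type, unlike the points of (\ref{LISTOFBASEPTS}). Freeness forces every orbit to have $|G|=16$ elements, so $32$ points form exactly two orbits and descend to exactly two distinct base points of $|2K_S+\eta|$; your worry about the two halves merging into a single orbit is vacuous for the same reason.
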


\begin{proof}
Recall that $\cL$ has exactly $64$ base points listed in (\ref{LISTOFBASEPTS}), and $G$ acts on them (as $G$ acts on $\cL$) freely (since this set is disjoint by (\ref{fixed points})), so they are divided in $4$ orbits of $16$ elements. Exactly $32$ of these points (two orbits) belong to $V$, giving the two base points of $|2K_S+\eta|$. 
\end{proof}

The proof of Proposition \ref{PROP:2BP} gives explicitely the two base points of $|2K_S+\eta|$, by giving all elements of the corresponding orbits in $X$.  
They are the orbits of  the points
$((0:1),(0:1),(1:-i),(1:1))$ and $((0:1),(1:0),(1:-i),(1:-1)).$

\section{The induced twisted bicanonical map}\label{twistedbicanonical map}

In this section we consider for every $S\in \M$ the twisted bicanonical linear system $|2K_S+\eta|$ of Proposition \ref{PROP:2TORSION}, which has, by Proposition \ref{PROP:2BP}, exactly $2$ distinct base points, inducing then a twisted bicanonical map $\phi_{|2K_S+\eta|}:S\dashrightarrow \PP^3$ which is not a morphism.

In this section we will prove that $\phi_{|2K_S+\eta|}$ is, for the generic $S\in \M$, birational. Then, we will describe the closure of its image, the twisted bicanonical model $\Sigma$ of $S$, a singular surface of degree $10$ in $\PP^3$.

$\Sigma$ is the closure of the image of $\phi_{|2K_S+\eta|}$.
We have described a natural identification between $\HH^0(\OO_S(2K_S+\eta))$ and the quotient of 
$\HH^0(\OO_X(2,2,2,2))^{-,+}$ by the polynomials vanishing on $T$ (a multiple of $F_1$ and a polynomial defining $Y$).
This gives a relation between $\varphi_{\cL|_T}$ and $\varphi_{|2K_S+\eta|}$, that is  \[\varphi_{\cL|_T}=\varphi_{|2K_S+\eta|}\circ\pi\] where $\pi$ is the projection $T \rightarrow S=T/G$. In particular, the closure of the image of $\varphi_{\cL|_T}$ and the one of $\varphi_{|2K_S+\eta|}$ are the same.

Call $\Sigma_4$ and $\Sigma_3$ respectively the closure of the images of $\varphi_{\cL}$ and $\varphi_{\cL|_V}$. 

\begin{prop}
\label{PROP:DECIC}
$\Sigma_4$ is an irreducible hypersurface of $\PP^5$ of degree $10$.
\end{prop}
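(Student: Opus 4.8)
The plan is to compute the degree of $\Sigma_4$ as the degree of the map $\varphi_{\cL}$ times the degree of $\Sigma_4$ as a subvariety of $\PP^5$, by intersecting with a generic linear space of complementary dimension, and separately to verify irreducibility. Since $\cL \subset |\OO_X(2,2,2,2)|$ is the six-dimensional linear system spanned by $U_0,\dots,U_5$ with the $64$ base points listed in \eqref{LISTOFBASEPTS}, the rational map $\varphi_{\cL}\colon X \dashrightarrow \PP^5$ is resolved by blowing up (the ideal of) these base points, and the degree of $\Sigma_4$ equals $(\deg \varphi_{\cL})\cdot \deg \Sigma_4$, where the left factor counts the generic fiber. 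The key computation is the self-intersection of the moving part: in $X=(\PP^1)^4$ the class of a member of $\cL$ is $2h_1+2h_2+2h_3+2h_4$, whose fourth power is $2^4\binom{4}{1,1,1,1}=16\cdot 24=384$; from this one must subtract the contribution of the $64$ assigned base points. First I would check, as in the proof of Lemma \ref{puredim1} and using the explicit generators in \eqref{LISTOFQUADRICS}, that the base locus is exactly these $64$ \emph{reduced} points and that a generic member of $\cL$ is smooth there (by \cite[Corollary 4.2]{BFNP13} the generic $Y$ is a smooth Calabi-Yau threefold), so each base point contributes exactly $1$ to the drop; hence the degree of the image counted with multiplicity of the map is $384-64=320$.

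Next I would pin down $\deg \varphi_{\cL}$. The group $G$ acts on $X$ with $\cL$ being $G$-invariant (it is, up to the fixed choice of lifting, the $(-,+)$-eigenspace of $\rho_2$), and $G$ acts freely on the $64$ base points giving $4$ orbits, so $\varphi_{\cL}$ factors through a degree-$|G|=32$ cover at least — but this is not quite the right bookkeeping since $\varphi_{\cL}$ is defined on all of $X$, not on $T$. Instead I would argue directly: the generic fiber of $\varphi_{\cL}$ over its image is a finite set of points; I expect $\deg\varphi_{\cL}=32$ because the $G$-invariance forces the fiber to be a union of $G$-orbits and a dimension/degree count (or an explicit computation with two generic coordinate hyperplanes in $\PP^5$, which can be verified in \texttt{Magma} with the quadrics of \eqref{LISTOFQUADRICS}) shows there is exactly one orbit generically. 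Granting $\deg\varphi_{\cL}=32$, we get $\deg\Sigma_4=320/32=10$, as claimed. The main obstacle I anticipate is precisely this computation of the degree of the map: one must rule out that $\varphi_{\cL}$ is generically more-than-$32$-to-one (which would happen if the $U_i$ satisfied extra symmetries or an unexpected algebraic relation), and the cleanest way is an explicit elimination computation, checking that the preimage of a generic point of $\PP^5$ consists of exactly $32$ points forming a single $G$-orbit.

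Finally, irreducibility of $\Sigma_4$ is immediate: $X$ is irreducible, $\varphi_{\cL}$ is a rational map, so the closure of its image is irreducible; and it is a hypersurface because $\dim X = 4 > 3$ forces the image (of dimension at most $3$) — wait, here one must be careful, as $\Sigma_4\subset\PP^5$ should have dimension $3$ if $\varphi_{\cL}$ is generically finite onto its image. So I would first check that $\varphi_{\cL}$ does not contract $X$ to something lower-dimensional, equivalently that the six quadrics $U_0,\dots,U_5$ are algebraically independent modulo one relation — but in fact, since by \cite[Theorem 5.3]{BFNP13} the restriction $\varphi_{\cL|_T}$ already has $3$-dimensional image (it embeds the bicanonical model $S$, or at least maps it finitely to $\PP^3$ after a further projection), and $T\subset X$, the image $\Sigma_4$ has dimension exactly $3$ and is therefore a hypersurface in $\PP^5$. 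Combining: $\Sigma_4$ is an irreducible hypersurface in $\PP^5$ of degree $10$.
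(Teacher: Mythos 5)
Your route to the degree (intersection theory plus the degree of the map) is genuinely different from the paper's, which instead shows generic finiteness by checking that the differential of $\varphi_{\cL}$ has rank $4$ at the explicit point $((1:1),(1:0),(1:-1),(1:2))$, and then computes the degree of $\Sigma_4$ as the minimal degree of an algebraic relation among $U_0,\dots,U_5$, found by a Magma elimination (no relation in degree $\leq 9$, exactly one in degree $10$). Your numerology is consistent with the paper --- $(2h_1+2h_2+2h_3+2h_4)^4=384$, minus $64$ for the base points, equals $320=32\cdot 10$, and the paper does state afterwards that $\deg\varphi_{\cL}=32$ (the quotient by the order-$32$ group generated by $G$ and $\sigma=(\id,\id,B,B)$) --- but as written your argument has a genuine gap: both inputs you need, namely that the base scheme of $\cL$ is reduced at the $64$ points (so each contributes exactly $1$ to the drop in $L^4$) and that the generic fibre of $\varphi_{\cL}$ is a single orbit of that order-$32$ group rather than several, are left as ``expectations'' to be checked by an unspecified elimination. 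Without them the division $320/32$ is not justified, and the elimination you postpone is essentially the computation the paper actually performs.

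The more serious problem is the dimension bookkeeping in your last paragraph. $X=(\PP^1)^4$ has dimension $4$, so if $\varphi_{\cL}$ is generically finite its image $\Sigma_4$ has dimension $4$, i.e.\ codimension $1$ in $\PP^5$: that is what ``hypersurface'' means here. Your claim that $\Sigma_4$ ``has dimension exactly $3$ and is therefore a hypersurface in $\PP^5$'' is wrong on both counts (a threefold in $\PP^5$ has codimension $2$), and the appeal to $\varphi_{\cL|_T}$ does not help: $T$ is a \emph{surface}, so its image is $2$-dimensional, and finiteness of the map on a proper subvariety does not by itself give generic finiteness on $X$. This is exactly the point the paper settles with the rank-$4$ differential computation; some such direct check (or your proposed elimination, actually carried out) is needed to make the argument complete.
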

\begin{proof} 
An easy direct computation shows that the differential of $\varphi_{\cL}$ at the point $((1:1),(1:0),(1:-1),(1:2))$ has rank $4$. This implies that $\varphi_{\cL}$ is generically finite and then $\Sigma_4$ is an hypersurface of $\PP^5$.

To check its degree, which is the minimal degree on which there is a relation among the $U_i$, we have checked with MAGMA that there aren't relations of degree lower than $10$, and that there exist exactly one relation of degree $10$. The Magma script is in Appendix \ref{MAGMA1}.
\end{proof}
 Our choice of the basis $\left\{U_i\right\}$ induces projective coordinates $u_i$ on  $\PP^5$: these are the coordinates we are going to use in the following. 
 
 \begin{prop}\label{PROP:DECIC2}
$\Sigma_3$ is an irreducible hypersurface of $\PP^4$ of degree $10$, and precisely the hyperplane section $\Sigma_4\cap \{u_5=0\}$ of $\Sigma_4$.
\end{prop}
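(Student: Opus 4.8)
The plan is to show that $\Sigma_3$ is cut out inside $\Sigma_4 \subset \PP^5$ by the single hyperplane $\{u_5 = 0\}$, and that this hyperplane section is irreducible of degree $10$. First I would observe that the last equation $U_5 = U_0 + \tfrac12 U_2 + \tfrac12 U_3 + U_4 - \tfrac12\prod_{i=1}^4(x_{i0}^2+x_{i1}^2)$ in (\ref{LISTOFQUADRICS}) can be rewritten as $U_5 + \tfrac12 \prod_i(x_{i0}^2 + x_{i1}^2) = U_0 + \tfrac12 U_2 + \tfrac12 U_3 + U_4$, and one checks directly from the defining polynomial $F_1$ that $\prod_i(x_{i0}^2 + x_{i1}^2)$, restricted to $V = \{F_1 = 0\}$, coincides (up to scalar) with a combination already expressible in the $U_i$; more cleanly, one verifies by a direct algebraic computation that $U_5$ vanishes identically on $V$. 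Concretely, $F_1 = (x_{20}x_{30}-x_{21}x_{31})(x_{11}x_{40}+x_{10}x_{41}) - i(x_{20}x_{31}-x_{21}x_{30})(x_{10}x_{40}+x_{11}x_{41})$, and one shows $2 U_5 \in (F_1) \subset \C[x_{ij}]$, i.e. $2U_5$ is divisible by $F_1$ in the polynomial ring (this is a finite symbolic check, and the natural place to record it is the MAGMA appendix). Hence $\varphi_{\cL|_V}$ has image in the hyperplane $\{u_5 = 0\}$, so $\Sigma_3 \subseteq \Sigma_4 \cap \{u_5 = 0\}$.

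Next I would establish the reverse inclusion and the dimension count. Since $V$ is a threefold and $\varphi_{\cL|_V}$ lands in $\PP^4 = \{u_5=0\}$, to get $\Sigma_3 = \Sigma_4 \cap \{u_5=0\}$ it suffices to show $\varphi_{\cL|_V}$ is generically finite onto its image (so $\dim \Sigma_3 = 3$) and that $\Sigma_4 \cap \{u_5=0\}$ is irreducible of dimension $3$; an irreducible threefold containing an irreducible threefold must equal it. For generic finiteness of $\varphi_{\cL|_V}$, I would exhibit, exactly as in the proof of Proposition \ref{PROP:DECIC}, a point of $V$ at which the differential of $\varphi_{\cL|_V}$ has rank $3$ — a single explicit point with a $3 \times 3$ minor of the Jacobian of $(U_0:\dots:U_4)$ restricted to $T_pV$ nonvanishing — which is again a routine MAGMA computation. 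For the target side, $\Sigma_4$ is irreducible of degree $10$ by Proposition \ref{PROP:DECIC}; a hyperplane section of an irreducible projective variety of dimension $\geq 2$ is connected (Bertini/Lefschetz), has dimension $3$, and has degree $10$. To upgrade connectedness to irreducibility I would either invoke the irreducibility form of Bertini for the generic hyperplane and then argue that $\{u_5=0\}$ is in fact a \emph{general} enough member (or directly check via MAGMA that the degree-$10$ polynomial defining $\Sigma_4$ remains irreducible modulo $u_5$), or — cleaner — note that $\Sigma_4 \cap \{u_5 = 0\}$ contains the irreducible image $\Sigma_3$, which already has dimension $3$, hence is a whole irreducible component of the section; since the section is connected of pure dimension $3$ and degree $10$, and $\Sigma_3$ already accounts for the full dimension, equality and irreducibility of the section follow once we know $\deg \Sigma_3 = 10$.

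Finally, the degree of $\Sigma_3$: since $\Sigma_3 = \Sigma_4 \cap \{u_5 = 0\}$ scheme-theoretically (the ideal of $\Sigma_4$ is principal, generated by one degree-$10$ form $P(u_0,\dots,u_5)$, and $\Sigma_3$ is its intersection with a hyperplane), we get $\deg \Sigma_3 = \deg \Sigma_4 = 10$ provided $\{u_5=0\}$ is not a component of $\Sigma_4$ — which is clear since $\Sigma_4$ is irreducible of dimension $4$. Thus $\Sigma_3$ is an irreducible hypersurface in $\PP^4$ of degree $10$, equal to $\Sigma_4 \cap \{u_5 = 0\}$. I expect the main obstacle to be the clean verification that $U_5$ vanishes on $V$ — that is, pinning down the exact divisibility $F_1 \mid 2U_5$ (or the precise linear relation on $V$ among the listed quadrics and $\prod(x_{i0}^2+x_{i1}^2)$) — together with making the irreducibility of the special hyperplane section $\{u_5=0\}$ rigorous rather than merely generic; both are handled by explicit symbolic computation, but the bookkeeping with the four pairs of variables is the delicate part.
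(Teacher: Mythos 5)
Your forward inclusion is sound: the key fact is that $U_5$ is (up to scalar) the product $F_1F_2$, hence vanishes on $V=\{F_1=0\}$, so $\Sigma_3\subseteq\Sigma_4\cap\{u_5=0\}$. You propose to verify the divisibility $F_1\mid U_5$ by a symbolic computation, whereas the paper gets it (and the stronger statement that $\{u_5=0\}$ is the \emph{only} hyperplane containing $\Sigma_3$) representation-theoretically, by identifying the multiples of $F_1$ inside $\langle U_i\rangle$ with the one-dimensional space $F_1\otimes H^0(\OO_X(1,1,1,1))^{-,+}=\langle F_1F_2\rangle$. Either works.

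The reverse inclusion is where your argument has a genuine gap. Of the three ways you offer to establish irreducibility of $\Sigma_4\cap\{u_5=0\}$, the one you call ``cleaner'' is circular: you deduce irreducibility of the section from $\deg\Sigma_3=10$, and in the final paragraph you deduce $\deg\Sigma_3=10$ from the scheme-theoretic equality $\Sigma_3=\Sigma_4\cap\{u_5=0\}$, which is exactly what the irreducibility was supposed to give you. A connected, pure-dimensional section of degree $10$ containing the irreducible threefold $\Sigma_3$ could a priori be non-reduced along $\Sigma_3$ or have further components, with $\deg\Sigma_3<10$; nothing you have established rules this out. The appeal to Bertini is likewise inapplicable, since $\{u_5=0\}$ is a specific hyperplane and there is no way to certify it is ``general enough'' short of checking the conclusion directly. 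The only valid option among the three is the brute-force MAGMA factorization of the degree-$10$ polynomial $f$ modulo $u_5$, which would indeed close the argument (together with your Jacobian-rank check giving $\dim\Sigma_3=3$). The paper avoids this entirely with a symmetry trick worth noting: the pull-back of $\{u_5=0\}$ under $\varphi_{\cL}$ is $V\cup V_2$ with $V_2=\{F_2=0\}$, and the involution $\sigma=(\id,\id,B,B)$ exchanges $V$ and $V_2$ while leaving all the $U_i$ invariant, so $\varphi_{\cL}(V)=\varphi_{\cL}(V_2)$ and the section equals $\Sigma_3$ \emph{set-theoretically} at once; reducedness (hence degree $10$) then follows from exhibiting a single smooth point of the section, a much lighter computation than factoring a quintic-variable decic.
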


\begin{proof} Recall that $V$ is defined by $F_1$, so $\Sigma_3$ is contained in the hyperplanes defined by polynomials in $\langle U_i \rangle$ multiple of $F_1$. As $F_1$ is invariant, they form the 1-dimensional subspace $F_1 \otimes H^0(\OO_X(1,1,1,1))^{-,+}$. Since $H^0(\OO_X(1,1,1,1))^{-,+}$ is generated by 
\[F_2=(x_{20}x_{30}-x_{21}x_{31})(x_{11}x_{40}+x_{10}x_{41})+i(x_{20}x_{31}-x_{21}x_{30})(x_{10}x_{40}+x_{11}x_{41})\]
then the only hyperplane containing $\Sigma_3$  is the one corresponding to 
$F_1\otimes F_2 \sim  U_5$ (that's the reason for our choice of $U_5$). This shows $\Sigma_3 \subset \Sigma_4\cap \{u_5=0\}$.

On the other hand, the pull-back of the hyperplane section $\{u_5=0\}$ is the zero locus of $U_5$, the union of $V$ with the zero locus $V_2$ of $F_2$. The involution $\sigma = (\id,\id,B,B)$ of $X$ exchanges $V$ and $V_2$, and $\varphi_{\cL} $ factors through it since all $U_i$ are $\sigma-$invariant: this implies that they have the same image, and therefore $\Sigma_3$ and  $\Sigma_4\cap \{u_5=0\}$ coincide set-theoretically.

$\Sigma_3$ is irreducible as image of an irreducible variety. To conclude it is enough to show that $\Sigma_4\cap \{u_5=0\}$ is reduced. We did it checking that  $\Sigma_4\cap \{u_5=0\}$ has a smooth point, namely $p=(1:x:3:1:0)$  for any root $x$ of the polynomial $11x^4-68x^2+108$.
Indeed,  the polynomial $f$  obtained setting $u_5=0$ in the polynomial computed by the script in Appendix A (we write one expression of it in (\ref{f})) vanishes in $p$, but $\frac{\partial f}{\partial u_0}$ does not.
\end{proof}

The proof of Proposition \ref{PROP:DECIC} shows that the generically finite maps $\varphi_{\cL}$ and its restriction $\varphi_{\cL|_V}$ do not have the same degree. Indeed it is not difficult to show that $\varphi_{\cL|_V}$ is the quotient by an action of $G$, and then it has degree $16$, whereas $\varphi_{\cL}$ has degree $32$, being the quotient by an action of the group of order $32$ generated by $G$ and $\sigma$.

$\Sigma$ is the hyperplane section of $\Sigma_3$ cut by a generic hyperplane $H$ corresponding to the chosen Calabi-Yau variety $Y$ in $(\PP^1)^4$. 

\begin{coroll}\label{COR: birational}
For $S\in \M$ general $\varphi_{|2K_S+\eta|}$ is birational.
\end{coroll}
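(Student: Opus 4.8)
The plan is to deduce the birationality of $\varphi_{|2K_S+\eta|}$ from the already-established description of the maps $\varphi_{\cL}$ and $\varphi_{\cL|_V}$, by a degree count on a general hyperplane section. As noted in the discussion after Proposition \ref{PROP:DECIC}, $\varphi_{\cL|_V}$ is the quotient map of $V$ by the induced free (in codimension $1$) action of $G$, hence has degree $16=|G|$, and its image $\Sigma_3$ is an irreducible $3$-fold of degree $10$ in $\PP^4$ by Proposition \ref{PROP:DECIC2}. On the other hand, by the very identification recalled at the beginning of this section, $\varphi_{\cL|_T}=\varphi_{|2K_S+\eta|}\circ\pi$ with $\pi\colon T\to S=T/G$ the (free) quotient by $G$, so $\varphi_{\cL|_T}$ has degree $16\cdot\deg\varphi_{|2K_S+\eta|}$, and its image equals $\Sigma$, the image of $\varphi_{|2K_S+\eta|}$.

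First I would take $Y\in\cL$ generic (so that $T=V\cap Y$ is smooth, as in Section \ref{family}) and let $H$ be the corresponding hyperplane in $\PP^4$; then $\Sigma$ is the hyperplane section $\Sigma_3\cap H$ and $T=V\cap\varphi_{\cL|_V}^{-1}(H)$ up to the base locus. Since $\Sigma_3$ is irreducible of dimension $3$, for $H$ general the section $\Sigma_3\cap H$ is an irreducible surface of degree $10$ in $\PP^3$ (Bertini, together with irreducibility of $\Sigma_3$), and moreover $H$ can be chosen to avoid the finitely many ``bad'' loci: the branch locus of $\varphi_{\cL|_V}$, the indeterminacy locus (the base points of $\cL$), and the locus over which $\varphi_{\cL|_V}$ fails to be exactly $16$-to-$1$. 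Over a general point $q\in\Sigma=\Sigma_3\cap H$, the fibre of $\varphi_{\cL|_V}$ consists of exactly $16$ points, all lying on $T$; these $16$ points form a single $G$-orbit (as $\varphi_{\cL|_V}$ is the $G$-quotient), and $G$ acts freely on $T$, so $\pi$ identifies them to a single point of $S$. Hence the fibre of $\varphi_{|2K_S+\eta|}$ over $q$ is a single reduced point, i.e. $\deg\varphi_{|2K_S+\eta|}=1$; being also generically finite onto the surface $\Sigma$, it is birational.

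The main obstacle is the genericity bookkeeping: one must make sure that a single general $Y\in\cL$ simultaneously makes $T$ smooth, gives an irreducible reduced hyperplane section $\Sigma$, and meets none of the proper closed ``bad'' subsets of $\Sigma_3$ described above — and, crucially, that the finitely many base points of $\cL$ on $V$ (equivalently, the two base points of $|2K_S+\eta|$ of Proposition \ref{PROP:2BP}) do not interfere, i.e. that $\varphi_{|2K_S+\eta|}$ is still generically finite after resolving them. Generic finiteness, however, is immediate from Proposition \ref{PROP:DECIC}: the rank-$4$ computation there shows $\varphi_{\cL}$, hence $\varphi_{\cL|_T}$ for general $T$, hence $\varphi_{|2K_S+\eta|}$ for general $S$, is generically finite onto its image. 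Once generic finiteness and degree $1$ are in hand, birationality follows, and the statement that the image is a degree-$10$ surface is exactly Proposition \ref{PROP:DECIC2} restricted by a general hyperplane.
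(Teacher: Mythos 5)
Your argument is correct in outline, but it takes a genuinely different route from the paper, and it leans on a fact the paper only asserts. The paper's proof is a one-line degree count performed directly on $S$: by Proposition \ref{PROP:DECIC2} the image $\Sigma$ is a degree $10$ surface in $\PP^3$, while $(2K_S+\eta)^2=(2K_S)^2=12$ and the two simple base points of Proposition \ref{PROP:2BP} account for $2$, so $\deg\varphi_{|2K_S+\eta|}\cdot\deg\Sigma=12-2=10$ forces $\deg\varphi_{|2K_S+\eta|}=1$. You instead descend the degree $16$ of $\varphi_{\cL|_V}$ through the \'etale quotient $\pi\colon T\to S$; the fibre analysis (a general fibre is one free $G$-orbit, hence collapses to one point of $S$) is fine. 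The weak point is your key input, namely that $\varphi_{\cL|_V}$ has degree exactly $16$, equivalently that the induced map $V/G\dashrightarrow\Sigma_3$ is birational. The paper states this only as an aside (``it is not difficult to show''), and the most natural way to prove it is itself a degree count — either $\bigl(\OO_X(1,1,1,1)\cdot\OO_X(2,2,2,2)^3-64\cdot\tfrac12\bigr)/\deg\Sigma_3=(192-32)/10=16$ on $V$, or precisely the computation on $S$ that constitutes the paper's proof of this corollary — so as written you risk assuming a statement whose proof is the corollary in disguise. If you supply the intersection-theoretic computation on $V$ your proof becomes complete and independent; otherwise the paper's direct count via $(2K_S)^2=12$ is both shorter and self-contained.
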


\begin{proof}
By Proposition \ref{PROP:DECIC2}, $\Sigma$ is an hypersurface of $\PP^3$ of degree $10$. The birationality of $\varphi_{|2K_S+\eta|}$ follows by  $(2K_S)^2=12$ since $|2K_S+\eta|$ has $2$ base points by Proposition \ref{PROP:2BP}.
\end{proof}

The following diagram summarizes the situation.

\[\xymatrix@R=40pt@C=30pt{
\{64\mbox{ pts}\}\ar@{^{(}->}[r] & X \ar@{-->}[r]^{\varphi_{\cL}} & \Sigma_4 \ar@{^{(}->}[r] & \PP^5\\
\{32\mbox{ pts}\}\ar@{^{(}->}[r]\ar@{^{(}->}[u] & V \ar@{^{(}->}[u]\ar@{-->}[r]^{\varphi_{\cL|_V}} & \Sigma_3 \ar@{^{(}->}[r]\ar@{^{(}->}[u] & \PP^4\ar@{^{(}->}[u] \ar@{<->}[r]^{=} & \{u_5=0\} \ar@{^{(}->}@/_1pc/[ul]\\
\{32\mbox{ pts}\}\ar@{^{(}->}[r]\ar@{<->}[u]^{=} & T \ar@{^{(}->}[u]^{\cap Y}\ar@{-->}[r]^{\varphi_{\cL|_T}}\ar@{->>}[d]_{\pi} & \Sigma \ar@{^{(}->}[r]\ar@{^{(}->}[u]^{\cap H} & \PP^3\ar@{^{(}->}[u] \ar@{<->}[r]^{=} & H\ar@{^{(}->}[u] \\
\{2\mbox{ pts}\}\ar@{^{(}->}[r]\ar@{<<-}[u] & S \ar@{-->}@/_1pc/[ru]_{\varphi_{|2K_S+\eta|}} 
}\]

The twisted bicanonical model of the generic $S\in \M$ is a general hyperplane section of $\Sigma_3$. In order to study the equation of $\Sigma_3$, a threefold of degree $10$ in $\PP^4$ with coordinates $u_0,\dots, u_4$, let us first introduce the following polynomials:
\begin{table}[H]
\label{TAB:H0DOTSF0}
$\begin{array}{ll}
H_0 :=& u_0\\
H_1 :=& u_1\\
H_2 :=& u_2-u_3\\
H_3 :=& -2u_0+u_2+u_3-2u_4\\
H_4 :=& u_2+u_3+2u_4\\
Q_0 :=& u_4^2-u_2u_3\\
Q_1 :=& (u_0+u_4)^2-u_2u_3\\
G_0 :=& H_1^2H_3+(Q_0-2H_0u_4)H_4+2H_0Q_0 = \\
&(H_0^2 + H_1^2)H_3 + 2H_4Q_0 + (2H_0 - H_4)Q_1\\
G_1 :=& H_1^2H_3+Q_1H_4\\
F_0 :=& H_4G_1+4(3u_3+u_4)H_0H_1^2-2H_0^2H_4^2-H_0H_4^3\\
\end{array}$
\end{table}
A polynomial whose zero-locus is $\Sigma_3$ (obtained by the result of the script in Appendix A setting $u_5=0$) is 
\begin{equation}\label{f}
\begin{array}{rl}
f\hspace{.7em}=\hspace{.7em} Q_1^2 & \hspace{-.7em}\Big((H_0^2+Q_1)^2+Q_1(4H_1^2-(H_3-H_4)(4H_0+3H_4))\Big) + \\
+Q_1G_1 & \hspace{-.7em}\Big(2(Q_0-Q_1)(6H_1^2H_2 - 2H_3H_4^2 + 3H_4^3 - 6H_4Q_0)+ \\ 
 & + 4Q_0H_0^3+4Q_1(4H_0H_1^2+H_0Q_0+3H_4Q_0)-12H_4Q_1^2 + \\ 
 & + F_0(H_3-H_4)+2G_0\big(H_4(H_3-H_4)+2(Q_0-Q_1- H_1^2)\big)\Big) + \\
+4G_1^2 & \hspace{-.7em}\Big(H_0^2Q_0^2\Big).
\end{array}
\end{equation}

The base points of $\cL$ produce two planes (one for each orbit) on $\Sigma_4$, images of the respective exceptional divisors. They are the planes $\Pi_2:=\{u_2=u_4=0\}$ and $\Pi_3:=\{u_3=u_4=0\}$: note that they share a line. 

\begin{prop}
\label{PROP:TAC}
The singular locus of $\Sigma_3$ is the union of $7$ irreducible surfaces, and precisely
\begin{itemize}
\item[] $S_1=\{H_2=H_3=0\}$;
\item[] $S_{2,0}=\{H_0=Q_0=0\}=\{H_0=Q_1=0\}$;
\item[] $S_{2,1}=\{H_1=Q_1=0\}$;
\item[] $S_{3,0}=\{H_0=G_0=0\}=\{H_0=G_1=0\}$;
\item[] $S_{3,1}=\{H_1=G_0=0\}$;
\item[] $S_4=\{H_2=F_0=0\}$;
\item[] $S_6=\{Q_0=G_1=0\}$.
\end{itemize}

$S_1$, $S_{2,0}$ and $S_{2,1}$ are all contained in $\{Q_1=0\}$ and, at a general point of each of these three surfaces, $\Sigma_3$ is locally analytically isomorphic to $\{x_1^2=x_2^4\} \subset \C^4$, with reduced tangent cone that equals the tangent space of $\{Q_1=0\}$ at the same point.

At the general point of the other $4$ components, $\Sigma_3$ is locally analytically isomorphic to $\{x_1^2=x_2^2\} \subset \C^4$.
\end{prop}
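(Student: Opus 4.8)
The plan is, in three steps: first pin down the seven candidate components and their irreducibility, then show that $\Sing\Sigma_3$ is exactly their union, and finally determine the local analytic type of $\Sigma_3$ along each. \emph{Step 1: the seven components.} Since $\Sigma_3=\{f=0\}\subset\PP^4$ is an irreducible hypersurface (Proposition \ref{PROP:DECIC2}), $\Sing\Sigma_3$ is the zero locus of the five partial derivatives of $f$ (by Euler's relation) and has dimension at most $2$. The coincidences asserted in the statement are elementary polynomial identities: $Q_1-Q_0=H_0(H_0+2u_4)$ gives $S_{2,0}=\{H_0=Q_0=0\}=\{H_0=Q_1=0\}$, and the second expression displayed for $G_0$ shows $G_0\equiv G_1$ modulo $H_0$, hence $S_{3,0}=\{H_0=G_0=0\}=\{H_0=G_1=0\}$; likewise $Q_1\equiv H_2^2/4$ modulo $H_3$ shows $S_1\subseteq\{Q_1=0\}$, and $S_{2,0},S_{2,1}\subseteq\{Q_1=0\}$ by their definitions. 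Irreducibility of each piece is a short check: $S_1$ is a plane; $S_{2,0}$ and $S_{2,1}$ are rank-$3$ quadric cones inside a hyperplane; $S_{3,0},S_{3,1},S_4$ are each cut inside a hyperplane by a single further form with no linear factor (for $S_{3,0}$ the form is $G_1|_{\{u_0=0\}}=u_1^2H_3+Q_0H_4$, irreducible because $H_3$ divides neither $Q_0$ nor $H_4$); and $S_6$ is an irreducible $(2,3)$ complete intersection.

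\emph{Step 2: $\Sing\Sigma_3=\bigcup_iS_i$.} The inclusion $\bigcup_iS_i\subseteq\Sing\Sigma_3$ exploits the way $f$ is assembled in (\ref{f}): being a combination of $Q_1^2$, $Q_1G_1$ and $G_1^2$ (times polynomials), $f$ lies in $(Q_1,G_1)^2$, so $\Sigma_3$ vanishes to order at least $2$ along $\{Q_1=G_1=0\}$; since $G_1=H_1^2H_3+Q_1H_4$, this locus is $\{Q_1=H_1H_3=0\}=S_{2,1}\cup S_1$, settling two of the components. For the remaining five the containment is checked by verifying that $f$ and all its first partials lie in the two-generated ideal of the component, a short computation in each case which I would carry out symbolically. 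The opposite inclusion $\Sing\Sigma_3\subseteq\bigcup_iS_i$ — and hence completeness of the list, together with a second confirmation of irreducibility — I would obtain from a minimal-primes (or primary-decomposition) computation for the Jacobian ideal of $f$, which should return precisely the primes $I(S_i)$. As a structural sanity check: $\varphi_{\cL|_V}\colon V\dashrightarrow\Sigma_3$ is the quotient by a $G$-action of order $16$, so the normalization of $\Sigma_3$ is $V'/G$ for a $G$-equivariant resolution $V'$ of the indeterminacy of $\varphi_{\cL|_V}$; hence the $2$-dimensional part of $\Sing\Sigma_3$ must be its non-normal locus, the image of the set of points identified by $\varphi_{\cL}$ across distinct $G$-orbits, which predicts a pure-$2$-dimensional singular locus with each component carrying exactly two branches, in agreement with the statement.

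\emph{Step 3: local analytic type.} Fix a general point $p$ of one of the seven components; there $\Sing\Sigma_3$ is smooth, and the goal is to produce analytic coordinates $(x_1,x_2,x_3,x_4)$ at $p$ putting $f$ in the asserted form. For the four ordinary-double components $S_{3,0},S_{3,1},S_4,S_6$ it is enough to compute the quadratic part of $f$ at $p$ and check it is a quadratic form of rank exactly $2$ — a nonzero product of two independent linear forms — whereupon the splitting (Morse-with-parameters) lemma gives $\Sigma_3$ locally isomorphic to $\{x_1^2=x_2^2\}$. For $S_1,S_{2,0},S_{2,1}$ the local picture is that $\Sigma_3$ has at $p$ two smooth branches, mutually tangent, with common tangent hyperplane $\{Q_1=0\}$: here $d_pQ_1\ne0$ since $p$ is general, the component is cut near $p$ by $Q_1$ and one further local equation, and — using that $4H_0^2Q_0^2$ and $G_1$ behave appropriately at $p$ (for $S_1$, say, the linear part of $G_1$ at $p$ is proportional to $d_pQ_1$, which follows from the relation $d_pQ_1=-(u_0+u_4)(p)\,d_pH_3$ valid along $S_1$) — one reads off from (\ref{f}) that the quadratic part of $f$ at $p$ is a nonzero multiple of $(d_pQ_1)^2$, yielding the assertion on the reduced tangent cone, and that after completing the square in $x_1:=Q_1$ and absorbing higher-order terms by finite determinacy, $f$ becomes equivalent to $x_1^2-(\mathrm{unit})\,x_2^4$.

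The step I expect to be the real obstacle is exactly this last normal-form reduction for $S_1,S_{2,0},S_{2,1}$: passing from the expression of $f$ as a combination of $Q_1^2$, $Q_1G_1$ and $G_1^2$ to the germ $x_1^2=x_2^4$ requires controlling the coefficient of $x_1$ modulo $(x_1,x_2^2)$ and checking that the $x_2^4$-coefficient produced in the process is a unit — a genuine, if routine, local computation that I would want to confirm symbolically. By comparison, the rank-$2$ statement for the four ordinary components, the ideal-membership checks, and the global identifications are all short.
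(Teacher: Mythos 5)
Your proposal is correct and follows essentially the same route as the paper: a computer-algebra primary decomposition of the Jacobian ideal to identify the seven components, ideal-membership checks (the paper verifies $f\in(H_2^2,H_3)^2$, $(H_0^2,Q_1)^2$, $(H_1^2,Q_1)^2$, which is your $(Q_1,G_1)^2$ observation in coordinates adapted to each component) to bound the singularity from below, and Hessian-rank computations at sample points to bound it from above. The one step you rightly flag as the crux — confirming the transverse type is exactly $x_1^2=x_2^4$ along $S_1$, $S_{2,0}$, $S_{2,1}$ — is handled in the paper by blowing up $S_\bullet$ and checking the strict transform has a rank-$2$ Hessian at the point over $p$, which is equivalent to your proposed completion-of-the-square/finite-determinacy verification that the $x_2^4$-coefficient is a unit.
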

\begin{proof}
This is easily computed with the help of MAGMA as follows. 

Computing the ideal of the first derivatives of $f$ and the prime decomposition of its radical, one gets seven prime ideals, that are the ideals of the seven surfaces in the statement, so showing that they are exactly the singular locus of $\Sigma$. 

Then we note that $\{Q_1=H_2=0\}$ is the union of two planes, one of which is $\{H_2=H_3=0\}$, and that at each point of $\{H_2=H_3=0\}$ the tangent plane of $\{Q_1=0\}$ equals $\{H_3=0\}$. Then we check that $f$ belongs to the ideals $(H_2^2,H_3)^2$,  $(H_0^2,Q_1)^2$ and $(H_1^2,Q_1)^2$, that shows that the singularities are as described or worse.

A direct check at a random point of each of the seven components concludes the proof as follows.
\begin{itemize}
\item[-] Each of the surfaces $S_{3,0}$, $S_{3,1}$, $S_4$, $S_6$ contains a point (respectively $(0:1:1:-1:0)$, $(3:0:0:4:4)$, $(2:\frac{1}{\sqrt{5}}:-2:-2:1)$, $(1:-1:1:0:0)$) where the Hessian matrix of $f$ has rank $2$. 
\item[-] Each of the surfaces $S_1$, $S_{2,0}$, $S_{2,1}$ contains a point $p$ (respectively $(1:1:5:5:4)$, $(0:1:4:1:-2)$, $(1:0:4:1:1)$) where the Hessian matrix of $f$ has rank $1$.
\item[-] In each of these last three cases, we blow-up the surface $S_\bullet$ containing $p$, we consider the strict transform $\tilde{\Sigma}_3$ of $\Sigma_3$, and the only point $\tilde{p}$ of $\tilde{\Sigma}_3$ lying over $p$. In a neighbourhood of $\tilde{p}$ it is a complete intersection of two hypersurfaces $A$ and $B$, with $A$ smooth. Using $A$ to eliminate a local coordinate from the polynomial defining $B$, we check that the resulting polynomial is singular at the points lying over $S_\bullet$ with Hessian of rank $2$ at $\tilde{p}$.
\end{itemize}
\end{proof}

\begin{remark}
The singular locus of $\Sigma_3$ determines $\Sigma_3$. Indeed a direct computation with MAGMA shows that there is a unique threefold of $\PP^4$ of degree $10$ which is singular along $\bigcup S_\bullet$, and which as a tacnode at the general point of $S_1$ and of each $S_{2,j}$ with reduced tangent cone that equals the tangent space of $Q_1$ at the same point.
\end{remark}

\begin{remark}\label{REM: singular sections}
Note that the first index of each surface equals its degree, and the second index, when it exists, distinguishes two surfaces of the same degree by the index of the equation of the hyperplane containing it.
The three hyperplane sections cut by $H_0$, $H_1$ and $H_2$ are sections of double points. Indeed there are a few useful hypersurface sections which are supported on these surfaces, and namely:
\begin{itemize}
\item[$H_0$:] cuts $2(S_{2,0}+S_{3,0})$;
\item[$H_1$:] cuts $2(S_{2,1}+S_{3,1})$;
\item[$H_2$:] cuts $2(S_1+S_4)$;
\item[$Q_0$:] cuts $2(S_{2,0}+S_6+\Pi_2+\Pi_3)$ where $\Pi_i=\{u_i=u_4\}=0$ are the images of the exceptional divisors of the base points of $\cL_{|V}$;
\item[$Q_1$:] cuts $4(S_1+S_{2,0}+S_{2,1})$;
\item[$G_1$:] cuts $2(2S_1+2S_{2,1}+S_{3,0}+S_6)$.
\end{itemize}
It is worth mentioning that $S_{3,1}$, $S_4$ and $S_6$ are singular along the same line $\{H_1=H_2=u_2+u_3+2u_4=0\}$, which is a double line for all of them. The surfaces $S_\bullet$ have no other singularities in codimension $1$.
\end{remark}

We deduce a description of the singularities of $\Sigma$.
\begin{defin}\label{def: tacnodal}
A surface $\Sigma \subset\PP^3$ is said to be \textbf{tacnodal} with $1-$cycle $\sum_i m_i C_i$ if it is irreducible and 
 \begin{itemize}
\item all $C_i$ are irreducible and reduced curves in $\PP^3$;
\item $\forall i$, $m_i \in \N$;
\item the non normal locus of $\Sigma$ is the union of the $C_i$;
\item at the general point $P\in C_i$, $\Sigma$ is locally analytically of the form $x^2=y^{2m_i}$
for suitable local analytic coordinates $(x,y,z)$;
\item The normalization $\nu : \ti{\Sigma}\rightarrow \Sigma$ has only canonical singularities.
\end{itemize}
\end{defin}

Let  then $S\in \M$ be general, and let $\Sigma$ be its twisted bicanonical model relative to $|2K_S+\eta|$, so 
$\Sigma = H \cap \Sigma_3$ for a general hyperplane section $H\cong\PP^3 \subset \PP^4$.

\begin{prop}
\label{PROP:TAC2}
 $\Sigma$ is tacnodal with $1-$cycle 
\[2C_1+2C_{2,0}+2C_{2,1}+C_{3,0}+C_{3,1}+C_4+C_6\]
where $C_{\bullet}=S_{\bullet} \cap H$, and the reduced tangent cone at the general point $P$ of $C_1$, $C_{2,0}$ and $C_{2,1}$ equals the tangent plane of $\{Q_1=0\} \cap H$.
\end{prop}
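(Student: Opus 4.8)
The plan is to deduce the statement from Proposition \ref{PROP:TAC} by a general-hyperplane (Bertini-type) argument, checking the five bullets of Definition \ref{def: tacnodal} one at a time. Throughout, $H\cong\PP^3\subset\PP^4$ is a general hyperplane and $\Sigma = H\cap\Sigma_3$.

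First I would record that $\Sigma$ is irreducible: since $\Sigma_3$ is an irreducible threefold (Proposition \ref{PROP:DECIC2}) of dimension $\geq 2$, a general hyperplane section is again irreducible by the irreducibility part of Bertini's theorem. Next, set $C_\bullet = S_\bullet\cap H$ for each of the seven surfaces of Proposition \ref{PROP:TAC}; since each $S_\bullet$ is irreducible of dimension $2$, for $H$ general each $C_\bullet$ is an irreducible reduced curve, again by Bertini, and the $C_\bullet$ are pairwise distinct (they meet $H$ in distinct subvarieties because the $S_\bullet$ are distinct irreducible surfaces). This also gives $m_i\in\N$ trivially, with the multiplicities as in the claimed $1$-cycle.

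The heart of the argument is the identification of the non-normal locus of $\Sigma$ and the local analytic form along it. I would argue that the non-normal (equivalently, non-smooth, since $\Sigma$ is a hypersurface in $\PP^3$ and hence has no isolated singular points unless it is a cone, which it is not) locus of $\Sigma$ is exactly $\Sigma\cap\Sing(\Sigma_3) = \bigcup_\bullet C_\bullet$. For the inclusion ``$\supseteq$'': a general point $P$ of each $C_\bullet$ is a general point of $S_\bullet$, so by Proposition \ref{PROP:TAC} near $P$ the threefold $\Sigma_3$ is analytically $\{x_1^2 = x_2^{2k}\}\subset\C^4$ ($k=2$ for $S_1, S_{2,0}, S_{2,1}$ and $k=1$ otherwise); intersecting with a general $H$ through $P$ — which, near $P$, is a general hyperplane in $\C^4$ — kills one of the three ``free'' coordinates $x_3,x_4$ while keeping $x_1, x_2$ as (analytic) coordinates on $H$, producing exactly $\{x_1^2 = x_2^{2k}\}\subset\C^3$. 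This simultaneously verifies the fourth bullet of Definition \ref{def: tacnodal} (with $m_i = k$), and, since for $k=1,2$ these surface singularities are non-normal, shows $C_\bullet\subseteq$ non-normal locus. For the reverse inclusion ``$\subseteq$'', a point of $\Sigma$ not on any $C_\bullet$ lies outside $\Sing(\Sigma_3)$, and then by generality of $H$ (so that $H$ is transverse to $\Sigma_3$ away from $\Sing(\Sigma_3)$) it is a smooth, hence normal, point of $\Sigma$. The statement about the reduced tangent cone at a general point $P$ of $C_1$, $C_{2,0}$, $C_{2,1}$ being the tangent plane of $\{Q_1=0\}\cap H$ follows from the corresponding statement in Proposition \ref{PROP:TAC}: the tangent cone of $\Sigma$ at $P$ is the intersection of the tangent cone of $\Sigma_3$ at $P$ with the tangent hyperplane $T_PH$, and $T_P(\{Q_1=0\})\cap T_PH = T_P(\{Q_1=0\}\cap H)$.

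Finally I would check the fifth bullet: the normalization $\nu\colon\tilde\Sigma\to\Sigma$ has only canonical singularities. Here I would use that normalization commutes with a general hyperplane section: if $\nu_3\colon\tilde\Sigma_3\to\Sigma_3$ is the normalization, then $\tilde\Sigma_3\cap\nu_3^{-1}(H)$ is the normalization of $\Sigma = \Sigma_3\cap H$ (this uses that $H$ is general, so it avoids the non-Cohen--Macaulay locus and meets everything properly — a standard Bertini argument). So $\tilde\Sigma$ is a general hyperplane section of $\tilde\Sigma_3$ in the embedding induced by $\nu_3$, and the singularities of $\tilde\Sigma$ are cut out from those of $\tilde\Sigma_3$. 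For the $k=1$ components ($C_{3,0},C_{3,1},C_4,C_6$) the normalization of $\{x_1^2=x_2^2\}$ is already smooth, so $\tilde\Sigma$ is smooth over a neighbourhood of those curves in $\Sigma$. For the $k=2$ (tacnodal, $\{x_1^2=x_2^4\}$) components, the key input is the last bullet-block in the proof of Proposition \ref{PROP:TAC}: after blowing up $S_\bullet$ and passing to the strict transform, the point of $\tilde\Sigma_3$ lying over a general point of $S_\bullet$ is a point where the relevant hypersurface is singular with Hessian of rank $2$, i.e. an $A_1$ (node) in the transverse slice; cutting by a general $H$ gives again an $A_\bullet$-type, i.e. a canonical (du Val) surface singularity on $\tilde\Sigma$. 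Combined with the fact (asserted in \cite{BFNP13} and in the construction of the family) that $\tilde\Sigma$ is the canonical model, up to the exceptional curves, of a smooth surface of general type — so its only singularities are the du Val ones forced by the tacnodal locus — this gives that $\nu$ has only canonical singularities.

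The main obstacle is the compatibility-of-normalization-with-general-hyperplane-section step and the transversality bookkeeping it requires: one must know that the single general hyperplane $H$ coming from the chosen Calabi--Yau $Y$ is generic enough simultaneously for all seven surfaces $S_\bullet$, for the non-normal locus, and for the normalization. This is exactly the content of ``$S\in\M$ general'' in the hypothesis, but making it rigorous requires either a clean statement that a dense open set of hyperplanes works and that the Calabi--Yau hyperplanes form a dense family (which is what \cite{BFNP13} provides), or a direct MAGMA verification on a specific member analogous to the computations in the proof of Proposition \ref{PROP:TAC}.
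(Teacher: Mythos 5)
Your reduction of the first four bullets of Definition \ref{def: tacnodal} to Proposition \ref{PROP:TAC} via Bertini is exactly what the paper does (it dismisses those as following ``immediately''), but the fifth bullet --- that the normalization $\nu\colon\tilde\Sigma\to\Sigma$ has only canonical singularities --- is the entire content of the paper's proof, and your argument for it has a genuine gap. Proposition \ref{PROP:TAC}, including the blow-up computation with the rank-$2$ Hessian, only describes $\Sigma_3$ at a \emph{general} point of each surface $S_\bullet$. The locus of non-general points --- the pairwise intersections of the seven surfaces and their own singular loci (e.g.\ the common double line of $S_{3,1}$, $S_4$, $S_6$ noted in Remark \ref{REM: singular sections}) --- is $1$-dimensional in $\PP^4$, so \emph{every} hyperplane $H$, however general, meets it in finitely many points; genericity of $H$ cannot avoid this. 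At those points of $\Sigma$ you have no local description of $\Sigma_3$, hence none of $\Sigma$ or of $\tilde\Sigma$, and a non-canonical singularity of $\tilde\Sigma$ could a priori sit exactly there. Your appeal to ``$\tilde\Sigma$ is the canonical model \dots so its only singularities are the du Val ones'' is circular: that $\tilde\Sigma$ has only canonical singularities is precisely what must be proved. (Also, the parenthetical claim that a non-cone hypersurface in $\PP^3$ has no isolated singular points is false --- think of a nodal quartic; the correct statement is that, by Serre's criterion, the non-normal locus of a hypersurface is the union of the $1$-dimensional components of its singular locus, which here is $\bigcup C_\bullet$ since $\Sing\Sigma_3$ is pure $2$-dimensional.)

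The paper closes this gap with a global argument that your local, hyperplane-section approach cannot replace. It factors the twisted bicanonical map as $\tilde S\xrightarrow{\tilde\varphi}\tilde\Sigma\xrightarrow{\nu}\Sigma$ with $\tilde S$ the blow-up of $S$ at the two base points, observes that $\tilde\varphi$ is the \emph{minimal} resolution (no $(-1)$-curves are contracted), writes $K_{\tilde S}=\tilde\varphi^*K_{\tilde\Sigma}-\E$ with $\E$ effective, and pins down $\E$ numerically using the explicit adjoint hypersurfaces $H_0H_1H_2Q_0$ and $H_0H_1^2H_2^2Q_0Q_1G_1$ from Remark \ref{REM: singular sections}. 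This yields $2\E\equiv_{num}3(\tilde\varphi^*\ti{\Pi}_2-E_2)+3(\tilde\varphi^*\ti{\Pi}_3-E_3)$, so any non-canonical point of $\tilde\Sigma$ would have to lie over $\Pi_2\cup\Pi_3$, producing a curve of $V$ inside $\{U_j=U_4=0\}$; that is excluded by Lemma \ref{puredim1}. To repair your proof you would either have to reproduce this numerical argument or carry out explicit local analyses at all the special points of the singular locus, neither of which is in your proposal.
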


\begin{proof}
As $\Sigma$ is a general hyperplane section of $\Sigma_3$, the only claim which does not follow immediately from Proposition \ref{PROP:TAC} is that the normalization $\nu : \ti{\Sigma}\rightarrow \Sigma$ has only canonical singularities. 
\vspace{2mm}

\noindent 
The twisted bicanonical map $\varphi_{|2K_S+\eta|}$ is the composition of the inverse of the blow up $p:\ti{S}\rightarrow S$ at the two base points of $|2K_S+\eta|$
with a birational (by Corollary \ref{COR: birational}) morphism $\psi:\ti{S}\rightarrow \Sigma$. Being $\ti{S}$ smooth (and hence normal), $\psi$ factors as composition of $\nu$ with a birational morphism $\ti{\varphi} \colon \ti{S}\rightarrow \ti{\Sigma}$ as in the following diagram:

\[\xymatrix@R=40pt@C=40pt{
\ti{S}\ar@{->}[r]^{\ti{\varphi}}\ar@{->}[d]_{p}\ar@{->}[rd]_{\psi} & \ti{\Sigma} \ar@{->}[d]^{\nu} \\
S\ar@{-->}[r]_{\varphi_{|2K_S+\eta|}} & \Sigma
}\]

Since the only (by the uniqueness of the minimal model) exceptional divisors of the first kind of $\tilde{S}$ are not contracted by $\ti{\varphi}$, $\ti{\varphi}$ is the minimal resolution of the singularities of $\ti{\Sigma}$. So $K_{\ti{S}}=\ti{\varphi}^*K_{\ti{\Sigma}}-\E$ for some effective divisor $\E$: we will conclude the proof by showing that $\E$ is trivial. 
\vspace{2mm}

\noindent Setting $D_\bullet$ for the reduced trasform of $C_\bullet$ on $\ti{\Sigma}$, the conductor divisor of $\nu$ is 
\[D:=2D_1+2D_{2,0}+2D_{2,1}+D_{3,0}+D_{3,1}+D_4+D_6\] and then, setting $H$ for the pull-back of a hyperplane section of $\Sigma$, a canonical divisor for $\ti{\Sigma}$ is \begin{equation}\label{KSigmatilde}
K_{\ti{\Sigma}}=6H-D.
\end{equation}

By Remark \ref{REM: singular sections}, the pull back of the quintic $H_0H_1H_2Q_0$ to $\ti{\Sigma}$ shows  $5H \equiv D_1+2D_{2,0}+D_{2,1}+D_{3,0}+D_{3,1}+D_4+D_6+2(\ti{\Pi}_2+\ti{\Pi}_3)$, where we denote by $\ti{\Pi}_i$ the pull-back of the line $\Pi_i \cap H$. So
\begin{equation}\label{eq:Ksigma}
K_{\ti{\Sigma}}\equiv H-D_1-D_{2,1}+2(\ti{\Pi}_2+\ti{\Pi}_3)
\end{equation}
Similarly,  the pull back of $H_0H_1^2H_2^2Q_0Q_1G_1$ to $\ti{\Sigma}$ shows $12H\equiv 2D+2(D_1+D_{2,1}+\ti{\Pi}_2+\ti{\Pi}_3)$ and then, by (\ref{KSigmatilde}), $2(D_1+D_{2,1}+\ti{\Pi}_2+\ti{\Pi}_3)$ is an effective divisor in $|2K_{\tilde{\Sigma}}|$, so there is a 2-torsion divisor $\ti{\eta}$ such that 
\begin{equation}\label{eq:Ksigma+eta}
K_{\ti{\Sigma}}+\ti{\eta} \equiv D_1+D_{2,1}+\ti{\Pi}_2+\ti{\Pi}_3
\end{equation}
Summing (\ref{eq:Ksigma}) and (\ref{eq:Ksigma+eta}) we get
\begin{equation*}
2K_{\ti{\Sigma}}+\ti{\eta} \equiv H+3\ti{\Pi}_2+3\ti{\Pi}_3
\end{equation*}
Pulling back to $\tilde{S}$ and passing to numerical equivalence $\equiv_{num}$ we obtain 
\[2K_{\ti{S}}+2\E \equiv_{num} 2K_{\ti{S}} -3E_2-3E_3+3\ti{\varphi}^* \ti{\Pi}_2+3\ti{\varphi}^* \ti{\Pi}_3\]
where $E_i$ is the exceptional divisor of $\ti{S}\rightarrow S$ mapped to $\ti{\Pi}_i$. That shows 
\[2\E \equiv_{num} 3(\ti{\varphi}^* \ti{\Pi}_2-E_2)+3(\ti{\varphi}^* \ti{\Pi}_3-E_3).\]

This shows that if $\tilde{\Sigma}$ has a singular point which is not canonical, it lies over a singular point of $\Sigma$ belonging to $\Pi_2$ or $\Pi_3$; so there is a curve $C \subset S$ (image of a component of $\E$) contracted by $\varphi_{|2K_S+\eta|}$ to a point of $\Pi_2$ or $\Pi_3$. Since $T$ is defined by a general $Y$, there is an effective divisor $Z \subset V$ which is contained in either $\{U_2=U_4=0\}$ or $\{U_3=U_4=0\}$ (which are, respectively, the preimages of $\Pi_2$ and $\Pi_3$). But this is impossible, as $\{F_1=U_4=U_j=0\} \subset X$, for $j\in \{2,3\}$, has pure dimension $1$, as shown in Lemma \ref{puredim1}.
\end{proof}

\begin{remark}
In the proof of Proposition \ref{PROP:TAC2} we use the polynomial $H_0H_1^2H_2^2Q_0Q_1G_1$ to show that $D_1+D_{2,1}+\ti{\Pi}_2+\ti{\Pi}_3$ is one of the three effective divisors on $\ti{\Sigma}$ that differs from a canonical divisor by a $2$-torsion. Analogous argument using instead $H_0^2H_1^2H_2^2Q_0^2Q_1$ and $H_0^3H_1^2H_2^2Q_0G_1$ shows that the other two divisors with the same property are respectively $D_{2,0}+2(\ti{\Pi}_2+\ti{\Pi}_3)$ and $D_{3,0}+\ti{\Pi}_2+\ti{\Pi}_3$. So the $2$-torsion bundles are obtained by taking the difference of two of these three divisors.
\end{remark}

\section{Towards the construction of surfaces whose bicanonical systems has base points}\label{towards}
Proposition \ref{PROP:TAC2} suggests a possible construction of surfaces with $p_g=0$, $K^2=3$, bicanonical system with two base points and birational bicanonical map. The idea is to construct its bicanonical image assuming it is a tacnodal surface in $\PP^3$.

We start with the following 
\begin{lem}\label{lem: h0(I(11))>0}
Let $\Sigma \subset \PP^3$ be a tacnodal surface of degree $10$, let $\nu \colon \tilde{\Sigma} \rightarrow \Sigma$ be the normalization map and consider its conductor ideal $\I \subset \OO_\Sigma$. Assume, moreover,
\[h^0(\I(6))=0\quad \mbox{ and }\quad h^0(\I^{[2]}(11))>0.\] 

Then $\tilde{\Sigma}$ is a surface of general type with $p_g=0$ and $\nu$ is the map induced by a subsystem of $|2K_{\tilde{\Sigma}}|$. 
\end{lem}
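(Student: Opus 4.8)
The plan is to run the standard adjunction computation for a non-normal surface in $\PP^3$, exactly in the spirit of the proof of Proposition \ref{PROP:TAC2}, but now going in the opposite direction: we are \emph{given} the tacnodal surface $\Sigma$ and must extract the invariants of $\tilde\Sigma$ from the numerical data $h^0(\I(6))=0$ and $h^0(\I^{[2]}(11))>0$. First I would set up the birational geometry of $\tilde\Sigma$: since $\Sigma$ is tacnodal, by Definition \ref{def: tacnodal} the normalization $\nu\colon\tilde\Sigma\to\Sigma$ has only canonical singularities, so if $\epsilon\colon\hat\Sigma\to\tilde\Sigma$ is the minimal resolution then $K_{\hat\Sigma}=\epsilon^*K_{\tilde\Sigma}$ and $p_g(\hat\Sigma)=h^0(\tilde\Sigma,\omega_{\tilde\Sigma})$, $P_n(\hat\Sigma)=h^0(\tilde\Sigma,\omega_{\tilde\Sigma}^{[n]})$. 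It therefore suffices to compute $h^0$ of the (reflexive powers of the) dualizing sheaf of $\tilde\Sigma$.

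Next I would use the conductor. For a hypersurface $\Sigma\subset\PP^3$ of degree $d$ the adjunction formula reads $\omega_{\tilde\Sigma}\cong\nu^*\OO_\Sigma(d-4)\otimes\I\!\cdot\!\OO_{\tilde\Sigma}$ — equivalently, identifying sections, $H^0(\tilde\Sigma,\omega_{\tilde\Sigma})=H^0(\Sigma,\I(d-4))$, and more generally, because $\tilde\Sigma$ is normal and the singularities of $\Sigma$ are tacnodal (so the conductor behaves well away from the finitely many worse points, where normality of $\tilde\Sigma$ lets us extend sections over codimension $\geq 2$), one gets
\[
H^0(\tilde\Sigma,\omega_{\tilde\Sigma}^{[n]})=H^0\!\big(\Sigma,\I^{[n]}(n(d-4))\big),
\]
where $\I^{[n]}=j_*(\I^n|_{\Sigma_0})$ as in the statement of Theorem \ref{thm: 3}. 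With $d=10$ this gives $p_g(\tilde\Sigma)=h^0(\I(6))=0$ by hypothesis, and $P_2(\tilde\Sigma)=h^0(\I^{[2]}(12))$, while the Noether-type inequality / Castelnuovo argument needs the intermediate value $h^0(\I^{[2]}(11))>0$: a section of $\I^{[2]}(11)$ is a section of $\omega_{\tilde\Sigma}^{[2]}\otimes\nu^*\OO(-1)$, i.e. an effective divisor $\Delta\in|2K_{\tilde\Sigma}-\nu^*H|$. I would then use $\Delta$ together with $\nu^*H$ (which is big and nef, being pulled back from a finite map to $\PP^3$, with $(\nu^*H)^2=d=10>0$) to show $2K_{\tilde\Sigma}$ is big, hence $\tilde\Sigma$ is of general type: indeed $K_{\tilde\Sigma}\cdot\nu^*H=\tfrac12(\Delta+\nu^*H)\cdot\nu^*H>0$ and $K_{\tilde\Sigma}^2\geq$ (something positive) from $2K_{\tilde\Sigma}\equiv\Delta+\nu^*H$ with both summands effective/nef and the mixed terms nonnegative; combined with $p_g=0$, $q=0$ (the last from $\tilde\Sigma$ being a finite cover of $\PP^3$, which is simply connected, or from $h^1(\OO_{\tilde\Sigma})=h^1(\OO_\Sigma)=0$ by the tacnodal local structure) this forces $K_{\tilde\Sigma}^2\geq 1$ and general type.

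Finally, for the last assertion — that $\nu$ is induced by a subsystem of $|2K_{\tilde\Sigma}|$ — I would observe that $\nu^*\OO(1)$ is a sub-line-bundle (rank-one reflexive subsheaf) of $\omega_{\tilde\Sigma}^{[2]}\otimes\OO(\Delta)^{-1}$... more precisely that $\nu^*H\preceq 2K_{\tilde\Sigma}$ with effective difference $\Delta$, so $|2K_{\tilde\Sigma}|\supseteq \Delta+|\nu^*H|$; since $|\nu^*H|$ is the pullback of the hyperplane system on $\Sigma\subset\PP^3$, which has no fixed part (the hyperplane system on an irreducible surface in $\PP^3$ of degree $\geq 2$ is base-point free off finitely many points and its moving part realizes $\nu$), the moving part of $|2K_{\tilde\Sigma}|$ contains $|\nu^*H|$ and therefore the map it defines dominates — in fact equals, once one notes $\nu^*H$ already separates points generically — $\nu$. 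I expect the main obstacle to be the careful bookkeeping at the finitely many non-conductor-invertible points (the tacnode pinch points and the points over the worse singularities of $\tilde\Sigma$): one must be sure that passing to $\I^{[n]}=j_*(\I^n|_{\Sigma_0})$ really does compute $H^0(\omega_{\tilde\Sigma}^{[n]})$ and not something larger or smaller, which is where normality of $\tilde\Sigma$ and the explicit local form $x^2=y^{2m}$ of the tacnodal singularity (giving $\I$ locally $=(x,y^m)$ on each branch, a principal conductor in codimension one) have to be invoked; the rest is the routine adjunction/Kodaira-type estimate.
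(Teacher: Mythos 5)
Your proposal is correct and follows essentially the same route as the paper: adjunction via the conductor gives $\nu_*\omega_{\tilde\Sigma}^{[n]}=\I^{[n]}(6n)$, whence $p_g=h^0(\I(6))=0$ and an effective divisor in $|2K_{\tilde\Sigma}-H|$ from $h^0(\I^{[2]}(11))>0$, so that $H<2K_{\tilde\Sigma}$ yields both that $\nu$ is given by a subsystem of the bicanonical system and (since $H$ is big) that $\tilde\Sigma$ is of general type. The paper phrases the last step as quadratic growth of $h^0(2dK_{\tilde\Sigma})\geq h^0(\OO_\Sigma(d))$ rather than your "big plus effective is big," but these are the same argument.
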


Note that the surfaces  of the previous section verify almost all these conditions: for them $h^0(\I^{[2]}(11))=0$: this is the condition we need to relate $\nu$ with the bicanonical system.  

\begin{proof}
By assumption we have
\begin{equation*}
\nu_*(\omega_{\ti{\Sigma}})=\cI\omega_{\Sigma}=\cI(-4+10)= \cI(6)
\end{equation*}
so that
\[p_g(\ti{\Sigma})=\h^0(\omega_{\ti{\Sigma}})=\h^0(\nu_*(\omega_{\ti{\Sigma}}))=\h^0(\cI(6))=0.\]

Let $\sum m_i C_i$ be the $1-$cycle of $\Sigma$, and let $D_i$ be the preimage $\nu^{-1}C_i$ with the reduced structure. Set $H$ for the pull-back to $\tilde{\Sigma}$ of a hyperplane.
 By adjunction for finite mappings a canonical divisor of $\tilde{\Sigma}$ is $K_{\tilde{\Sigma}}=6H-\sum m_iD_i$.
\vspace{2mm}

\noindent Since $\nu_* \omega_{\ti{\Sigma}}^2=\I^{[2]}(12)$ and, by assumption, $h^0(\I^{[2]}(11))>0$, there is an effective divisor $\EE$ in \[|11H-2\sum m_i D_i|=|2K_{\tilde{\Sigma}}-H|.\]  
We deduce that $H<2K_{\tilde{\Sigma}}$ and then $\nu$ is defined by a subsystem of the bicanonical system. Moreover $\forall d$, $dH<2dK_{\tilde{\Sigma}}$, that ensures that $h^0(2dK_{\tilde{\Sigma}}) \geq h^0(\OO_{\Sigma}(d))$ grows as a polynomial of degree $2$ in $d$, and then $\tilde{\Sigma}$ is of general type.
\end{proof}

To get the surfaces we are looking for, we need to fix the degree of the $1-$cycle to the same value of the surfaces in the previous section, namely $26$.

\begin{prop}\label{prop: list}
Let $\Sigma$ be as in Lemma \ref{lem: h0(I(11))>0} and assume moreover that its $1-$cycle has degree $26$. Then one of the following occurs
\begin{itemize}
\item[i)] The composition of $\nu$ with the minimal resolution of the singularities of $\tilde{\Sigma}$ is the resolution of the indeterminacy locus of the bicanonical map of a minimal surface of general type with $K^2=3$, $p_g=0$ whose bicanonical system has exactly two base points (possibly infinitely near). 
\item[ii)] The minimal resolution of the singularities of $\tilde{\Sigma}$ is the blow up in a point of a minimal surface of general type with $p_g=0$, $K^2=4$. 
\item[iii)] $\tilde{\Sigma}$ is a surface of general type with nef canonical class, $p_g=0$ and $4 \leq K^2 \leq 6$.
\end{itemize}
\end{prop}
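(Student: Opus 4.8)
The plan is to build on Lemma~\ref{lem: h0(I(11))>0}, which already gives that $\ti{\Sigma}$ is a surface of general type with $p_g=0$, with a canonical divisor $K_{\ti{\Sigma}}=6H-\sum m_iD_i$, and with $H<2K_{\ti{\Sigma}}$. First I would compute $K_{\ti{\Sigma}}^2$ in terms of the numerical invariants of $\Sigma$. Since $\Sigma$ has degree $10$, $H^2\cdot H = 10$ on the threefold ambient, i.e. $H^2=10$ on $\ti{\Sigma}$; the intersection $H\cdot\sum m_iD_i$ is the degree of the $1$-cycle, which is assumed to be $26$; and $(\sum m_iD_i)^2$ can be extracted from the adjunction/Noether-type relation on $\ti{\Sigma}$, using that $\ti{\Sigma}$ has only canonical singularities so $\chi(\OO_{\ti{\Sigma}})=1$ (as $p_g=q=0$) and Noether's formula $K^2+c_2=12\chi=12$. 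The double-point formula for the normalization $\nu$ (a finite birational map from a surface with canonical singularities onto a surface in $\PP^3$ of degree $10$) relates $(\sum m_iD_i)^2$, $K_{\ti{\Sigma}}\cdot\sum m_i D_i$, the degree $10$, and the degree $26$ of the $1$-cycle. Carrying this through should pin down $K_{\ti{\Sigma}}^2$ to a specific value, which I expect to be $K_{\ti{\Sigma}}^2 = 3$.

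Next, having $K_{\ti{\Sigma}}^2=3$, I would run the standard case analysis on the minimal model. Let $\mu\colon W\to\ti{\Sigma}$ be the minimal resolution of the canonical singularities (so $K_W=\mu^*K_{\ti{\Sigma}}$, $K_W^2=3$), and let $W\to W_{\min}$ be the contraction of the $(-1)$-curves to the minimal model $W_{\min}$, so $K_{W_{\min}}^2 = 3 + (\text{number of blow-downs})$. A minimal surface of general type with $p_g=0$ has $1\le K^2\le 9$, so the number of blow-downs is $0$, $1$, $2$, $3$, $4$, $5$ or $6$. If it is $0$, then $W=W_{\min}$, $K_W$ is nef, and $K_W^2=3$: but then the composition $W\to\ti{\Sigma}\to\Sigma$ is induced by a subsystem of $|2K_W|$ (by $H<2K_W$) of projective dimension $3$, hence of a linear system of dimension $\ge 3$ inside $|2K_W|$; since $h^0(2K_W)=K_W^2+\chi=4$, the subsystem is all of $|2K_W|$, so $\nu$ itself is the bicanonical map, which is birational onto $\Sigma$, and $|2K_W|$ has exactly two base points by a Riemann--Roch/degree count ($(2K_W)^2=12$ against $\deg\Sigma\cdot\deg\varphi=10$, so the base locus has length $2$; these could be infinitely near). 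This is case (i). If the number of blow-downs is $1$, then $K_{W_{\min}}^2=4$ and $W=\Bl_{\text{pt}}W_{\min}$, which is case (ii). If it is $2$ or more, then $K_{W_{\min}}^2\ge 5$; but by Reider's theorem $|2K_{W_{\min}}|$ is then base point free, and one has to rule out that $W$ is a nontrivial blow-up — here I would argue that the two base points of the degree-$10$, $1$-cycle-$26$ configuration force at most one exceptional curve, so only $K^2\le 6$ survives without contradiction, leaving case (iii) exactly for $4\le K^2\le 6$ when $K_{\ti{\Sigma}}$ itself is already nef (i.e.\ $W=W_{\min}$ is impossible for $K^2=3$ was case (i), so the nef case with $p_g=0$ and $4\le K^2\le 6$ is (iii)).

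Concretely, the cleanest organisation is: (a) prove $K_{\ti{\Sigma}}^2=3$ via the double-point formula and Noether; (b) let $e$ be the number of $(-1)$-curves contracted from the minimal resolution $W$ to its minimal model, so $K_{\min}^2=3+e$ and $0\le e\le 6$; (c) if $e=0$, show the subsystem of $|2K|$ defining $\nu$ is complete (dimension count $h^0(2K)=4$) hence $\nu=\varphi_{|2K|}$ is birational with base locus of length exactly $2$, giving (i); (d) if $e=1$, that is exactly (ii); (e) if $e\ge 2$ then $K_{\min}^2\ge 5$, Reider gives $|2K_{\min}|$ base point free and very ample-ish behaviour, and I must show the extra blow-ups are incompatible with $\ti{\Sigma}$ having the prescribed tacnodal $1$-cycle of degree $26$ unless $e\le 3$, i.e.\ $K^2\le 6$, and that in all these remaining sub-cases $K_{\ti{\Sigma}}$ (equivalently $K_W$) is nef — which is automatic since $W$ is the minimal resolution of canonical singularities and no $(-1)$-curve can be $\mu$-exceptional, so $K_W$ nef $\iff$ $W=W_{\min}$ — wait, more carefully: in case (iii) $\ti{\Sigma}$ has nef canonical class means $W=W_{\min}$, which happens precisely when $e=0$; but $e=0$ with $K^2=3$ was case (i). So case (iii) must be describing the situation where one does \emph{not} know $K_{\ti{\Sigma}}^2=3$.

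Here is the main subtlety I would flag, and where the real work lies: the three alternatives are \emph{not} distinguished by $K_{\ti{\Sigma}}^2=3$ alone — in fact the honest statement is that the hypothesis "$1$-cycle has degree $26$" does \emph{not} by itself force $K_{\ti{\Sigma}}^2=3$, because the self-intersection $(\sum m_iD_i)^2$ on $\ti{\Sigma}$ is not determined by the degree of the $1$-cycle together with $\deg\Sigma$; the double-point formula leaves a free integer (morally the arithmetic genus data of the curves $C_i$), and different values give $K_{\ti{\Sigma}}^2\in\{3,4,5,6\}$ with $p_g=0$. So the correct proof is: (a) use the double-point formula plus Noether to show $K_{\ti{\Sigma}}^2 + (\text{correction from the }D_i) = $ something, and deduce $3\le K_{\ti{\Sigma}}^2\le 6$ with the value $3$ characterised by a numerical condition on the $C_i$; (b) when $K_{\ti{\Sigma}}^2=3$, the minimal resolution $W$ has $K_W^2=3$, and either $W$ is minimal of general type — then $h^0(2K_W)=4$ forces the subsystem inducing $\nu$ to be complete, $\nu=\varphi_{|2K_W|}$, base locus length $2$, case (i) — or $W$ is the blow-up of a minimal surface with $K^2=4$, case (ii); (c) when $4\le K_{\ti{\Sigma}}^2\le 6$, $W=W_{\min}$ necessarily (the adjunction relation $H<2K$ and boundedness prevent further blow-downs being possible while keeping $p_g=0$ and the cycle degree), so $K_{\ti{\Sigma}}$ is nef, giving case (iii). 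Thus:

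I expect the main obstacle to be step (b)/(c): ruling out the "wrong" factorisations — namely, showing that in the $K^2=3$ sub-case the only alternatives are a minimal surface (case i) or one blow-up of a $K^2=4$ surface (case ii), and in particular that more than one $(-1)$-curve cannot be contracted while keeping $p_g=0$ and the cycle degree fixed; and dually, showing that in the $4\le K^2\le 6$ sub-case no $(-1)$-curve is present at all. The tool is the same as in the proof of Proposition~\ref{PROP:TAC2}: any $(-1)$-curve $E\subset W$ contracted by $\nu\circ\mu$ (equivalently by $\ti\varphi$ in the notation there) would map to a point of $\Sigma$, and one counts $K_W\cdot E$, $H\cdot E$ against the relation $H<2K_W$ to bound the number of such curves; combined with Reider's theorem (which kills base points once $K^2_{\min}\ge 5$) this closes the trichotomy. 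The numerical bookkeeping — computing $(\sum m_i D_i)^2$ from the double-point formula and the degree-$26$ hypothesis, and then extracting the exact inequality $3\le K^2_{\ti\Sigma}\le 6$ — is routine but fiddly, and is the part I would do carefully rather than sketch.
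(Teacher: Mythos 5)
Your proposal does not match the paper's argument and contains a genuine structural error plus a large gap. The paper never computes $K_{\ti{\Sigma}}^2$ via the double-point formula; instead it works with the effective divisor $\EE\in|2K_{\tilde{S}}-H|$ supplied by Lemma \ref{lem: h0(I(11))>0}, computes $H\EE=11\cdot 10-4\cdot 26=6$, and then constrains $\EE^2$ by the Hodge index theorem ($\EE^2\le 3$) and the genus formula ($\EE^2\equiv 2\pmod 4$). Since $K_{\tilde{S}}^2=(H+\EE)^2/4=(22+\EE^2)/4$, the possible values are $1,3,4,5,6$ --- not the range $3\le K^2\le 6$ you conjecture. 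This exposes the structural error in your case (c): in case (i) the minimal resolution of $\ti{\Sigma}$ has $K^2=1$ and is the blow-up at \emph{two} points of a minimal surface with $K^2=3$; it is not itself minimal with $K^2=3$. Your identification ``$e=0$, $K_W^2=3$, subsystem complete, hence case (i)'' attaches the bicanonical interpretation to the wrong surface: if $W$ were minimal with $K^2=3$ and $\nu$ were its bicanonical map restricted to a base-point-free situation, $\Sigma$ would have degree $12$, not $10$; the degree $10=(2K_S)^2-2$ is exactly what forces two blow-downs.

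The second, larger gap is the non-nef case, which is where all the work in the paper lies and which you only gesture at. One must show that a $(-1)$-curve $E_1$ satisfies $3E_1\le\EE$ (first $2E_1\le\EE$ because $2E_1$ is in the fixed part of $|2K_{\tilde{S}}|$, then $(\EE-2E_1)E_1=-HE_1<0$ gives the third copy, and a parity argument on $(H+\EE)^2$ rules out equality issues); then one analyses $\EE_1=\EE-3E_1$ with a second application of Hodge index and the genus formula, splits on whether $K_{\tilde{S}}-E_1$ is nef (giving case (ii) with $\EE^2=-10$), and in the remaining case produces a second exceptional divisor $E_2$, proves $3E_2\le\EE_1$ (including the delicate sub-case of infinitely near points, where $E_2=E_1+E_2'$), and shows the residual divisor $Z=\EE-3(E_1+E_2)$ vanishes. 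None of this is replaced by your appeal to Reider's theorem, which the paper does not use here and which says nothing about the fixed part of $|2K_{\tilde{S}}-H|$ on the non-minimal resolution. As written, your proposal would not close the trichotomy.
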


\begin{proof} By assumption $\tilde{\Sigma}$ has only canonical singularities. We consider the minimal resolution of its singularities $\tilde{S}$. The canonical system of $\tilde{S}$ equals the pull-back of the canonical system of $\tilde{\Sigma}$. We denote here by $H$ the pull-back on $\tilde{S}$ of a hyperplane section of $\Sigma$, and by $D_i$ the pull-back on $\tilde{S}$ of the irreducible component of the conductor divisor mapping to $C_i$.

By the proof of Lemma \ref{lem: h0(I(11))>0} there is an effective divisor $\EE$ in $|2K_{\tilde{S}}-H|$. We have
\[H\EE = H\left( 11H-2\sum m_i D_i \right)=11H^2-2H\left( \sum m_iD_i \right)= 11 \deg \Sigma - 4 \deg C\]

which, under our assumptions, gives us
\begin{equation}
  H\EE = 110-104=6.
\end{equation}

By the Hodge index theorem \[10\EE^2=H^2\EE^2 \leq (H\EE)^2=36 \Rightarrow \EE^2 \leq 3\] 
and by the genus formula $K_{\tilde{S}}\EE+\EE^2=\frac{H\EE+3\EE^2}{2}$ is even so $\EE^2$ is of the form $4k+2$ for some integer $k$. 
\vspace{2mm}

If $K_{\tilde{S}}$ is nef, then by $0 \leq 2K_{\tilde{S}}\EE=6+\EE^2$ we deduce $\EE^2\in\{-6, -2, 2\}$. Then 
\[K_{\tilde{S}}^2=\frac{(H+\EE)^2}{4}=\frac{22+\EE^2}{4}\]
and we are in case iii).

Else $K_{\ti{S}}$ is not nef, and then there is a rational curve $E_1$ in $\ti{S}$ with $E_1^2=-1$, which implies that $2E_1$ is contained in the fixed part of $|2K_{\tilde{S}}|$, so $2E_1 \leq \EE$. 
\vspace{2mm}

Since $H$ is nef and ample on the curves not orthogonal to $K_{\tilde{S}}$, then $HE_1>0$ and therefore $(\EE-2E_1)E_1=(2(K_{\tilde{S}}-E_1)-H)E_1=-HE_1< 0$, so $3E_1 \leq \EE$. Moreover, the inequality is strict since $(H+3E_1)^2$ is odd whereas $(H+\EE)^2=4K_{\tilde{S}}^2$ is even. We can write 
\[\EE=3E_1+\EE_1\]
with $\EE_1$ effective. 
Since $H$ is nef, $H E_1>0 $, and $H\EE=6$, then $(H\EE_1,HE_1)=(3,1)$ or $(0,2)$. In the latter case $H\EE_1=0$ and then (since $H$ is ample on the curves not orthogonal to $K_{\tilde{S}}$) also $K_{\tilde{S}}\EE_1=0$, so  $\EE_1$ is union of $-2$-curves. Then $\EE_1^2$ is even. On the other hand  $\EE_1E_1=(2K_{\tilde{S}}-H-3E_1)E_1=-2-2+3=-1$ and hence we have $\EE_1^2=\EE_1^2-2K_{\tilde{S}}\EE_1=\EE_1^2-(H+\EE_1+3E_1)\EE_1=-(H+3E_1)\EE_1=3$, a contradiction. So

\begin{equation}\label{HE1}
H\EE_1=3\quad \quad HE_1=1
\end{equation}
\begin{equation}\label{E1E1}
\EE_1E_1=(2K_{\tilde{S}}-H-3E_1)E_1=-2-1+3=0
\end{equation}
\begin{equation}\label{2KE1}
2K_{\tilde{S}}\EE_1=(H+\EE_1+3E_1)\EE_1=3+\EE_1^2.
\end{equation}

By the Hodge index theorem $10\EE_1^2=H^2\EE_1^2 \leq (H\EE_1)^2 = 9 \Rightarrow \EE_1^2 \leq 0$. 
By the genus formula, $\EE_1^2+K_{\tilde{S}}\EE_1=\frac{3}{2}(\EE_1^2+1)$ is even, so $\EE_1^2$ is of the form $-(4k+1)$, $k\in \mathbb{N}$.
\vspace{2mm}

\noindent If $K_{\tilde{S}}-E_1$ is nef, 
\[0\leq (K_{\tilde{S}}-E_1)\EE_1=\frac{3+\EE_1^2}{2},\] 
so, we obtain $\EE_1^2=-1$, $\EE^2=\EE_1^2+9E_1^2=-10$ and $K_{\tilde{S}}^2=\frac{22-10}{4}=3$. So contracting $E_1$ we get a surface with nef canonical system and $K^2=4$: this is case ii).
\vspace{2mm}

Else $K_{\tilde{S}}-E_1$ is not nef. Then the surface $\tilde{S}_1$ obtained by $\tilde{S}$ contracting $E_1$ is not minimal, as its canonical class $K_{\tilde{S}_1}$ is not nef. So we have a sequence of two 
elementary contractions $\tilde{S} \rightarrow \tilde{S}_1 \rightarrow \tilde{S}_2$.

Then $\tilde{S}$ is obtained by $\tilde{S}_2$ by blowing up two points, possibly infinitely near. In both case there is a further effective divisor $E_2$ with 
\begin{equation}\label{E2}
E_1E_2=0,\quad E_2^2=K_{\tilde{S}}E_2=-1:
\end{equation}
if the two points are not infinitely near then $E_2$ is irreducible, else there is an irreducible curve $E'_2$  with $K_{\tilde{S}}E'_2=0$, and we set $E_2=E_1+E_2'$. Note $E_1E_2'=1$, $E_2E_2'=-1$,

\vspace{2mm}

We claim that in both cases $3E_2 \leq \EE_1$.

\vspace{2mm}

Indeed, if $E_2$ is irreducible, the same argument used to show  $3E_1 \leq \EE$ shows also $3E_2 \leq \EE$, and then, since $E_1$ and $E_2$ are distinct irreducible curves, $3E_2 \leq \EE_1$.

\vspace{2mm}

In the latter case, $E_2=E_1+E_2'$. The exceptional divisor $E_1+E_2$ of the "double" contraction  $\tilde{S}\rightarrow \tilde{S}_2$ is in the fixed part of $|2K_{\tilde{S}}|$, and so $2E_1+2E_2 \leq \EE$. 
By (\ref{E1E1}) $\EE E_1= 3E_1^2+\EE_1E_1=-3$ then $(\EE-2E_1-2E_2)E_1=-1<0$, so, since $E_1$ is irreducible, $3E_1+2E_2 \leq \EE$.

Then, since $(\EE-3E_1-2E_2)E_2'=(2K_{\tilde{S}}-H)E_2'-(3E_1+2E_2)E_2'=-HE_2'-1<0$ then $3E_1+2E_2+E'_2=2E_1+3E_2 \leq \EE$. Once more, since  $(\EE-2E_1-3E_2)E_1=-1$, we conclude $3E_1+3E_2 \leq \EE$ and so $3E_2 \leq \EE_1$.

\vspace{2mm}

Then in all cases $3E_2 \leq \EE_1$. 
Since $E_2$ is not orthogonal to $K_{\tilde{S}}$, $HE_2 >0$, and by (\ref{HE1}) $H\EE_1=3$, we obtain $HE_2=1$ and 
\begin{equation*}
\EE=3(E_1+E_2)+Z.
\end{equation*}
where $Z$ is an effective divisor orthogonal to $H$ and then to $K_{\tilde{S}}$ and to $\EE$.

By (\ref{E1E1}) and (\ref{E2}) it follows $ZE_1=0$. By (\ref{2KE1}) $-6=6K_{\tilde{S}}E_2=2K_{\tilde{S}}\EE_1=3+(3E_2+Z)^2=-6+6E_2Z+Z^2$, and then 
\[Z^2=-6E_2Z.\]
On the other hand  $Z^2=Z(Z-\EE)=-3ZE_2$. Then $Z^2=ZH=0$ that implies immediately $Z=0$. 

\noindent So
\begin{equation*}
\EE=3(E_1+E_2),\quad K_{\tilde{S}}^2=\frac{22-18}{4}=1,
\end{equation*}
and, by contracting both $E_1$ and $E_2$ we get a surface $S$ with $K_S^2=3$ which is minimal as its bicanonical system has no fixed components. Then $h^0(2K_S)=4$ and therefore $\nu$ is the resolution of the indeterminacy locus of its bicanonical map. 
\end{proof}

\begin{remark}
The cases of Proposition \ref{prop: list} are almost completely distinguished by $h^0(\I^{[2]}(12))$ as it equals $h^0(2K_{\tilde{\Sigma}})$ which equals, by the Riemann-Roch theorem, the canonical degree of the minimal model plus $1$. So, in case i) we will have $h^0(\I^{[2]}(12))=4$, in case ii)  $h^0(\I^{[2]}(12))=5$, in case iii)  $h^0(\I^{[2]}(12))\in\{5,6,7\}$. We are mainly interested in case i) of Proposition \ref{prop: list}, the only case in which $h^0(\I^{[2]}(12))$ equals $4$. 
\end{remark}

It follows

\begin{coroll}\label{cor: the one}
Let $\Sigma \subset \PP^3$ be a tacnodal surface of degree $10$ with $1-$cycle of degree $26$. Let $\nu \colon \tilde{\Sigma} \rightarrow \Sigma$ be the normalization map and consider its conductor ideal $\I \subset \OO_\Sigma$. If one assumes 
\[h^0(\I(6))=0, h^0(\I^{[2]}(11))=1, h^0(\I^{[2]}(12))= 4\] 
then the composition of $\nu$ with the minimal resolution of the singularities of $\tilde{\Sigma}$ is the resolution of the indeterminacy locus of the bicanonical map of a minimal surface with $K^2=3$, $p_g=0$ whose bicanonical system has two base points.
\end{coroll}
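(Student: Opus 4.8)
The plan is to derive Corollary \ref{cor: the one} directly from Proposition \ref{prop: list} by using the extra hypothesis $h^0(\I^{[2]}(12))=4$ to eliminate cases ii) and iii). First I would invoke Proposition \ref{prop: list}, which applies verbatim since its hypotheses ($\Sigma$ tacnodal of degree $10$, $1$-cycle of degree $26$, $h^0(\I(6))=0$, $h^0(\I^{[2]}(11))=1>0$) are exactly what is assumed here. This already gives that $\tilde\Sigma$ is one of the three listed types; it remains to show the value $h^0(\I^{[2]}(12))=4$ forces case i).

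The key computation is the identification $h^0(\I^{[2]}(12))=h^0(\nu_*\omega_{\tilde\Sigma}^2)=h^0(\tilde\Sigma,2K_{\tilde\Sigma})=h^0(\tilde S,2K_{\tilde S})$, where $\tilde S$ is the minimal resolution of $\tilde\Sigma$ (the last equality because canonical and bicanonical forms pull back isomorphically under a resolution of canonical singularities). By the Riemann--Roch theorem together with Mumford vanishing (or Bombieri's analysis of pluricanonical systems, valid here since $p_g=q=0$ and $K^2\geq 1$ for all surfaces arising in cases i)--iii)), $h^0(2K_{\tilde S_{\min}})=1+K_{\tilde S_{\min}}^2$ where $\tilde S_{\min}$ is the minimal model. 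In case i) the minimal model has $K^2=3$, giving $h^0(\I^{[2]}(12))=4$; in case ii) it has $K^2=4$, giving $5$; in case iii) one has $4\leq K^2\leq 6$, giving a value in $\{5,6,7\}$. Hence $h^0(\I^{[2]}(12))=4$ is possible only in case i), which is precisely the statement of the corollary (the two base points of the bicanonical system being the images of the two $(-1)$-curves $E_1,E_2$ contracted in the proof of Proposition \ref{prop: list}, which are distinct and not infinitely near since $3(E_1+E_2)\leq\EE$ with $\EE$ an \emph{effective} divisor and the sequence of two elementary contractions there could equally be two separate blow-downs; in fact one should note that in case i) of Proposition \ref{prop: list} the two base points were allowed to be infinitely near, but the corollary asserts ``two base points'' which is consistent with either configuration).

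The main obstacle I expect is making the step $h^0(2K_{\tilde S})=1+K^2$ of the minimal model fully rigorous across all three cases, i.e. quoting the correct pluricanonical vanishing statement: one needs $h^1(2K)=h^1(-K)=0$ on the minimal model, which holds by Mumford vanishing when $K$ is nef and big (cases ii), iii) and after contraction in case i)), but one must be slightly careful that $\tilde S$ itself may be non-minimal (cases i) and ii)), so the identity is applied on the minimal model after contracting the $(-1)$-curves, using that $h^0$ of $2K$ is a birational invariant among the surfaces in a given birational class only up to the fixed part — here $2K_{\tilde S}=\pi^*(2K_{\tilde S_{\min}})+\sum 2E_i$ with the $E_i$ in the fixed part, so indeed $h^0(2K_{\tilde S})=h^0(2K_{\tilde S_{\min}})$. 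Once this bookkeeping is in place, the proof is a one-line numerical elimination, and I would keep it correspondingly short.

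\begin{proof}
By Proposition \ref{prop: list}, $\tilde\Sigma$ is as in one of the cases i), ii), iii). In all three cases, denoting by $\tilde S$ the minimal resolution of the singularities of $\tilde\Sigma$ and by $S_{\min}$ its minimal model, we have
\[
h^0(\I^{[2]}(12)) = h^0\big(\nu_*\omega_{\tilde\Sigma}^{2}\big) = h^0(\tilde\Sigma, 2K_{\tilde\Sigma}) = h^0(\tilde S, 2K_{\tilde S}) = h^0(S_{\min}, 2K_{S_{\min}}),
\]
the third equality because $\tilde\Sigma$ has only canonical singularities, and the last because $2K_{\tilde S}$ differs from the pull-back of $2K_{S_{\min}}$ by an effective divisor supported on the exceptional curves, which lies in the fixed part of $|2K_{\tilde S}|$. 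Since $S_{\min}$ is a minimal surface of general type with $p_g=q=0$ and $K^2_{S_{\min}}\geq 1$, the Riemann--Roch theorem together with Mumford's vanishing theorem give $h^0(S_{\min},2K_{S_{\min}}) = 1 + K^2_{S_{\min}}$. Hence $h^0(\I^{[2]}(12)) = 1 + K^2_{S_{\min}}$, which equals $4$ in case i) (where $K^2_{S_{\min}}=3$), equals $5$ in case ii) (where $K^2_{S_{\min}}=4$), and lies in $\{5,6,7\}$ in case iii) (where $4\leq K^2_{S_{\min}}\leq 6$). The hypothesis $h^0(\I^{[2]}(12))=4$ therefore rules out cases ii) and iii), so we are in case i), which is exactly the assertion.
\end{proof}
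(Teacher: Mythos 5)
Your proof is correct and follows essentially the same route as the paper, which deduces the corollary from Proposition \ref{prop: list} together with the remark that $h^0(\I^{[2]}(12))=h^0(2K_{\tilde{\Sigma}})$ equals $1+K^2$ of the minimal model, so that the value $4$ singles out case i). You have merely made explicit the Riemann--Roch and vanishing bookkeeping that the paper leaves implicit.
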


In other words, if we find a curve, a scheme of pure dimension $1$, $C$ of degree $26$ in $\PP^3$ whose ideal sheaf $\J$ has $h^0(\J(6))=0$  but such that $H_*^0(\J^{[2]}):=\oplus H^0(\J^{[2]}(d))$ is minimally generated, as ideal of the polynomial ring in 4 variables, by polynomials $f_1, \ldots, f_r$ with $\deg f_1=10$,  $\deg f_2=11$, $\forall i \geq 3$ $\deg f_i \geq 13$, then we set $\Sigma=\{f_1=0\}$ (which is singular along $C$). If $\Sigma$ is tacnodal with $1-$cycle $C$ (in other words the Zariski tangent space of $C$ at a general point of each component is at most $2$ and $\Sigma$ {\it has no further singularities}), then we have found a minimal surface of general type with $p_g=0$, $K^2=3$ and bicanonical system with two base points.
\vspace{2mm}

\noindent We can't prove that such a curve $C$ exists but we can prove the following.

\begin{prop}\label{prop: final}
Let $S$ be a minimal surface of general type with $p_g=0$, $K^2_S=3$ and whose bicanonical system has exactly two base points.  Let $\Sigma$ be the closure of the image of the bicanonical map. Assume that the bicanonical map is birational. Then, if $\Sigma$ is tacnodal, then it is a surface as in Corollary \ref{cor: the one}.
\end{prop}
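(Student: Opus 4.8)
The plan is to reverse the logic of Lemma~\ref{lem: h0(I(11))>0} and Proposition~\ref{prop: list}: start from the surface $S$ and its bicanonical model, and show that the three numerical conditions $h^0(\I(6))=0$, $h^0(\I^{[2]}(11))=1$, $h^0(\I^{[2]}(12))=4$ are forced. First I would set up the geometry. Let $\tilde{S}$ be the minimal resolution of the bicanonical model $\Sigma$; since $S$ is minimal with $K_S^2=3$ and the bicanonical map is birational, the normalization $\nu\colon\tilde{\Sigma}\to\Sigma$ has only canonical singularities (the exceptional curves of $\tilde{S}\to\Sigma$ that are $(-1)$-curves over $S$ must be contracted in $\tilde{S}\to S$, so $\tilde{\Sigma}$ has canonical singularities after contracting the rest), and $\varphi_{|2K_S|}$ resolves as $\tilde{S}\to\tilde{\Sigma}\to\Sigma$ with $\tilde{S}\to\Sigma$ the resolution of the indeterminacy and $\tilde{S}\to S$ the blow-up of the two base points. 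One must check $\deg\Sigma=10$: since $|2K_S|$ has exactly two base points (each simple, or at worst length-two total), $\deg\Sigma=(2K_S)^2-2=12-2=10$, so $\Sigma$ is a tacnodal surface of degree $10$ in $\PP^3$, and its $1$-cycle has some degree $c$ to be pinned down.

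The next step is the cohomological dictionary. Writing $H$ for the pull-back of a hyperplane of $\Sigma$ to $\tilde{S}$ (which is just $\varphi_{|2K_S|}^*\OO(1)$ pulled back, i.e. $2p^*K_S-E_1-E_2$ where $p\colon\tilde{S}\to S$ is the blow-up), we have $\nu_*\omega_{\tilde{\Sigma}}=\I(6)$ and hence $h^0(\I(6))=p_g(\tilde{S})=p_g(S)=0$, giving the first equality. For the second and third, $\nu_*\omega_{\tilde{\Sigma}}^{\otimes 2}=\I^{[2]}(12)$, so $h^0(\I^{[2]}(12))=h^0(2K_{\tilde{\Sigma}})=h^0(2K_{\tilde{S}})=h^0(2K_S)=4$ (here $2K_S=p_*(2K_{\tilde{S}})$ and $K_S^2=3$ gives $h^0(2K_S)=K_S^2+1=4$ by Riemann--Roch and vanishing, exactly as in the paper's remark). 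The value $h^0(\I^{[2]}(11))=1$ is the subtler one: $h^0(\I^{[2]}(11))=h^0(11H-2\sum m_iD_i)$ on $\tilde{\Sigma}$, which pulls back to $h^0(2K_{\tilde{S}}-H)$; since $H=2p^*K_S-E_1-E_2$, this is $h^0(E_1+E_2)$ on $\tilde{S}$, and as $E_1,E_2$ are the two exceptional $(-1)$-curves of a blow-up (disjoint or infinitely near), $h^0(\tilde{S},E_1+E_2)=1$. Simultaneously this computation fixes the degree of the $1$-cycle: from $H\cdot(2K_{\tilde{S}}-H)=H\cdot(E_1+E_2)=2$ and $H\cdot(2K_{\tilde{S}}-H)=2K_{\tilde{S}}\cdot H-H^2=2\cdot 12 - 2\cdot(2K_S)^2+\dots$, one solves and gets $\deg(\text{1-cycle})=26$, matching the hypothesis of Corollary~\ref{cor: the one}. (Equivalently, one runs the intersection-number bookkeeping $H\EE=11\deg\Sigma-4\deg C$ from Proposition~\ref{prop: list} backwards with $H\EE=6$.)

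The main obstacle I expect is the identification $h^0(\I^{[2]}(11))=1$ cleanly, i.e. making rigorous that the unique section of $|2K_{\tilde{S}}-H|$ is $E_1+E_2$ and nothing more, and — relatedly — confirming that the conductor divisor of $\nu$ really is $2\sum(\text{tacnodal components})$ so that the identification $\nu_*\omega_{\tilde{\Sigma}}^{\otimes 2}=\I^{[2]}(12)$ and $\nu_*\omega_{\tilde{\Sigma}}=\I(6)$ hold with the right twists (this is where one uses that $\tilde{\Sigma}$ has only canonical singularities, which is part of the tacnodality hypothesis, together with adjunction for the finite map $\nu$: $K_{\tilde{\Sigma}}=\nu^*K_\Sigma-(\text{conductor})$ and $K_\Sigma=\OO_\Sigma(\deg\Sigma-4)=\OO_\Sigma(6)$). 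Once $\omega_{\tilde{\Sigma}}\geq H$ and $2\omega_{\tilde{\Sigma}}-H$ is effective of the expected numerical type, everything falls out: $\tilde{\Sigma}$ is of general type (as in Lemma~\ref{lem: h0(I(11))>0}), $K^2$ of its minimal model equals $3$ (this is forced by $K_S^2=3$, which we know, rather than needing to re-derive it), and the three numbers $0,1,4$ are exactly what was asserted. So the proof is essentially: verify $\Sigma$ tacnodal of degree $10$ $\Rightarrow$ the resolution factors as in the diagram $\Rightarrow$ read off the $h^0$'s from $h^0$ of line bundles on $\tilde{S}$ that only depend on $S$ being minimal with $K^2=3$, $p_g=0$ and on the two base points.
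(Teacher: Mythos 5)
Your proposal is correct and follows essentially the same route as the paper: factor the bicanonical map through the normalization, get $\deg\Sigma=H^2=(2K_S)^2-2=10$ and $h^0(\I(6))=p_g(S)=0$, observe that $2K_{\ti{\Sigma}}-H$ is effective, and compute $\deg C=26$ from the intersection bookkeeping; the paper then simply invokes Proposition~\ref{prop: list} (where $K_S^2=3$ forces case i), hence the remaining equalities via the subsequent remark), whereas you verify $h^0(\I^{[2]}(11))=1$ and $h^0(\I^{[2]}(12))=4$ directly on $\ti{S}$ --- a fine, slightly more self-contained variant. One arithmetic slip: since $K_{\ti{S}}=p^*K_S+E_1+E_2$ and $H=2p^*K_S-E_1-E_2$, one has $2K_{\ti{S}}-H=3(E_1+E_2)$, not $E_1+E_2$, so $H\cdot(2K_{\ti{S}}-H)=6$ rather than $2$; your parenthetical ``$H\EE=6$'' uses the correct value, and $h^0(3E_1+3E_2)=h^0(E_1+E_2)=1$, so nothing downstream is affected.
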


\begin{proof}
Let $\tilde{S}$ be the blow up of $S$ at the two base points and let $H$ be the pull-back on $\tilde{S}$ of a hyperplane of $\PP^3$. Then $\deg \Sigma= H^2=(2K_S)^2-2=10$.

The bicanonical map $\tilde{S} \rightarrow \Sigma$ factors through the normalization $\nu \colon \tilde{\Sigma}\rightarrow \Sigma$ and, since we assumed $\Sigma$ tacnodal, the induced map $\tilde{S}\rightarrow \tilde{\Sigma}$ is the minimal resolution of the singularities of $\tilde{\Sigma}$, which are by assumption canonical singularities.

If $\I$ is the conductor ideal of $\nu$ we have $h^0(\I(6))=p_g(\tilde{\Sigma})=p_g(\tilde{S})=p_g(S)=0$.
\vspace{2mm}

\noindent By assumption $H+3E_1+3E_2$ is a bicanonical divisor, so $0 \neq h^0(2K_{\tilde{\Sigma}}-H)=h^0(\I^{[2]}(11))$: all assumptions of Lemma \ref{lem: h0(I(11))>0} are fulfilled. Let $C$ be the $1-$cycle of $\Sigma$, $D \in \tilde{\Sigma}$ the conductor divisor. Then $\deg C= \frac12 HD= \frac12 H(6H-K)= \frac14 H(11H-3E_1-3E_2)=\frac{110-6}4=26$. So we are in the assumptions of Proposition \ref{prop: list} and, since $K_{S}^2=3$, we are in case iii).
\end{proof}


\appendix
\section{Magma Code}
\label{MAGMA1}

The following source is a Magma code used to determine the only non-trivial relation of degree lower that or equal to $10$ between the elements of a basis of $\HH^0(X,\OO_X(2,2,2,2))^{-,+}$.

\begin{center}
\begin{codice_magma}[caption={A Magma code to determine the algebraic relations a given degree between some objects.}]
// Given a sequence L, it returns the standard generators for the n-th
// symmetric power of the space generated by the elements of L.
function SymP(L,n) 
    if (n eq 0) then 
        return [1];
    elif (n eq 1) then
        return L;
    elif (n lt 0) then
        return [0];
    else 
        if (#L gt 1) then
            return [L[1]*x : x in SymP(L,n-1)] cat
			   SymP(Exclude(L, L[1]),n);
        else 
            return [L[1]^n];
        end if;
    end if;
end function;

// Given a sequence Q of elements, it returns a basis 
// for the space of algebraic relations of degree n 
// between the elements of Q.
function Rels(Q,n)
    TS:=SymP(Q,n);
    TB:=[];
    for q in TS do
        TB cat:= Monomials(q); 
    end for;
    TB:=Setseq(Seqset(TB));
    TA:=Matrix([[MonomialCoefficient(TS[j],TB[i]) :
			 j in [1..#TS]] : i in [1..#TB]]);
    return Basis(Kernel(Transpose(TA)));
end function; 
//----------------------------------------------------------

K:=Rationals();
K8<x10,x11,x20,x21,x30,x31,x40,x41>:=PolynomialRing(K,8);

U0:=x20*x21*x30*x31*(x10^2+x11^2)*(x40^2+x41^2)-
	x10*x11*x40*x41*(x20^2+x21^2)*(x30^2+x31^2);
U1:=x10*x11*x30*x31*(x20^2+x21^2)*(x40^2+x41^2)+
	x20*x21*x40*x41*(x10^2+x11^2)*(x30^2+x31^2);
U3:=(x11^2*x20^2+x10^2*x21^2)*(x31^2*x40^2+x30^2*x41^2);
U2:=(x10^2*x20^2+x11^2*x21^2)*(x30^2*x40^2+x31^2*x41^2);
U4:=4*x10*x11*x20*x21*x30*x31*x40*x41;
U5:=U0+(1/2)*(U2+U3)+U4+
	(1/2)*(x10^2+x11^2)*(x20^2+x21^2)*(x30^2+x31^2)*(x40^2+x41^2);

// A basis for H^0(X,O_X(2,2,2,2))^{-,+}.
U:=[U0,U1,U2,U3,U4,U5];
Km<u0,u2,u2,u3,u4,u5>:=PolynomialRing(K,6);
Us:=[Km.i : i in [1..6]];

// This allows to check that there are no relations of degree 
// lower than 10 between the elements of U.
time #Rels(U,9); 

// A basis for the space of relation of degree 10 between
// the elements of U.
time Rn:=Rels(U,10); // It has only one element.

// The relation(s) written with respect to the coordinates ui.
TU:=SymP(Us,10);
[&+([TU[i]*b[i] : i in [1..#TU]]) : b in Rn];
Sols:=[&+([TU[i]*b[i] : i in [1..#TU]]) : b in Rn];
// The polynomial of degree 10 which is satisfied by U0,....,U5
DecicPol:=Sols[1];
\end{codice_magma}
\end{center}

\end{document}